\documentclass[a4paper, 10pt, reqno]{amsart}

\usepackage[utf8]{inputenc}

\usepackage{graphicx}
\usepackage{amsmath}
\usepackage{amsfonts}
\usepackage{amssymb}
\usepackage{amsthm}
\usepackage[pagewise]{lineno}
\usepackage[doublespacing]{setspace}

\theoremstyle{plain}
\newtheorem{theorem}{Theorem}

\numberwithin{theorem}{section}

\usepackage{hyperref}
\hypersetup{
colorlinks=true,
linkcolor=blue,
citecolor=red,
filecolor=magenta,
urlcolor=cyan,
pdftitle={},
pdfauthor={},
bookmarks=true,
}

\begin{document}

\author{Abhishek Das}
\title[Vanishing viscosity limits]{Explicit structure of the vanishing viscosity limits with initial data consisting of $\delta$-distributions starting from two point sources}

\address{Abhishek Das 
\newline 
National Institute of Science Education and Research Bhubaneswar,
P.O. Jatni, Khurda 752050, Odisha, India
\newline
An OCC of Homi Bhabha National Institute,
Training School Complex,
Anushaktinagar, Mumbai 400094, India
}

\email{abhishek.das@niser.ac.in, ad16.1992@gmail.com}

\subjclass[2010]{35F25; 35B25; 35L67; 35R05}

\keywords{Zero-pressure gas dynamics; delta-waves; interaction of waves}

\maketitle

\begin{abstract}

In this article, we consider the one-dimensional zero-pressure gas dynamics system 

\[
u_t + \left( {u^2}/{2} \right)_x = 0,\ \rho_t + (\rho u)_x = 0
\]
in the upper-half plane with a linear combination of two $\delta$-distributions

\[ 
u|_{t=0} = u_a\ \delta_{x=a} + u_b\ \delta_{x=b},\ \rho|_{t=0} = \rho_c\ \delta_{x=c} + \rho_d\ \delta_{x=d}
\]
as initial data. Here $a$, $b$, $c$, $d$ are distinct points on the real line ordered as $a < c < b < d$. Our objective is to provide a detailed analysis of the structure of the vanishing viscosity limits of this system utilizing the corresponding modified adhesion model

\[
u^\epsilon_t + \left({(u^\epsilon)^2}/{2} \right)_x =\frac{\epsilon}{2} u^\epsilon_{xx},\ \rho^\epsilon_t + (\rho^\epsilon u^\epsilon)_x = \frac{\epsilon}{2} \rho^\epsilon_{xx}.
\] 
For this purpose, we extensively use the various asymptotic properties of the function erfc$: z \longmapsto \int_{z}^{\infty} e^{-s^2}\ ds$ along with suitable Hopf-Cole transformations.
\end{abstract}

\section{Introduction}

We consider the initial value problem for the system

\begin{equation}
u_t + \left( {u^2}/{2} \right)_x = 0,\ \rho_t + (\rho u)_x = 0,\ \left( x , t \right) \in \textbf{R}^1 \times \left( 0 , \infty \right),
\label{intro-1}
\end{equation}
under the assumption that the initial data consists of a linear combination of $\delta$-distributions, more precisely,

\begin{equation}
u|_{t=0} = u_a\ \delta_{x=a} + u_b\ \delta_{x=b},\ \rho|_{t=0} = \rho_c\ \delta_{x=c} + \rho_d\ \delta_{x=d},
\label{intro-2}
\end{equation}
where $a,b,c,d$ are points on the real line positioned according to the inequalities $a < c < b < d$ and $u_a$, $u_b$, $\rho_c$, $\rho_d \in \textbf{R}^1$ are real constants. 

The system \eqref{intro-1} is an important system of partial differential equations called the zero pressure gas dynamics system. It finds applications in cosmology and is closely related to the Zeldovich approximation (\cite{z1}). This system describes the evolution of matter
in the expansion of universe as cold dust moving under the effect of gravity alone. Here $u$ and $\rho$ respectively denote the velocity and the density of the particles, $x \in {\textbf{R}}^1$ denotes the space variable and $t>0$ denotes time. 

The first equation in \eqref{intro-1} is the Burgers equation. In general, even for smooth initial data, we cannot expect to find solutions of this problem in the class of smooth functions. Therefore, we have to look for weak solutions satisfying the given system \eqref{intro-1} in the sense of distributions, namely

\begin{small}
\[
\begin{aligned}
\int_{0}^{\infty} \int_{-\infty}^{\infty} \left( u\ \phi_t + f(u)\ \phi_x \right)\ dx dt + \int_{-\infty}^{\infty} u(x,0)\ \phi (x,0)\ dx = 0,\ \phi \in C_c^\infty \left( \textbf{R}^1 \times [ 0 , \infty ) \right).
\end{aligned} 
\]
\end{small}
The initial value problem for the Burgers equation was studied by Hopf \cite{h1} using the method of vanishing viscosity. He considered the problem

\[
\begin{aligned}
u_t + \left( \frac{u^2}{2} \right)_x &= \mu u_{xx},
\\
u|_{t=0} &= u_0
\end{aligned}
\]
for $\mu > 0$ with locally integrable initial data $u_0$ under the additional assumption that

\[
\lim_{|x| \rightarrow \infty} \frac{\int_{0}^{x} u_0 \left( \xi \right)\ d\xi}{x^2} = 0.
\] 
This resulted in an explicit formula

\[
\begin{aligned}
u(x,t) = \frac{\int_{-\infty}^{\infty}\ \frac{x-y}{t}\ e^{- \frac{F(x,y,t)}{2 \mu}}}{\int_{-\infty}^{\infty}\ e^{- \frac{F(x,y,t)}{2 \mu}}},
\end{aligned}
\]
where 

\[
\begin{aligned}
F(x,y,t) = \frac{(x-y)^2}{2t} + \int_{0}^{y}\ u_0 \left( \xi \right)\ d\xi,
\end{aligned}
\]
and the subsequent passage to the limit as $\mu \rightarrow 0$ was discussed in detail.

Lax \cite{lax1} extended the work of Hopf \cite{h1} to general conservation laws of the form

\[
u_t + \left( f(u) \right)_x = 0,
\]
under bounded measurable initial data $u|_{t=0} = u_0$, where the flux function $f \in C^2$ was assumed to be strictly convex and have superlinear growth at infinity:

\[
\begin{aligned}
f'' > 0,\ \lim_{y \rightarrow \infty} \frac{f(y)}{|y|} = \infty.
\end{aligned}
\]
A weak solution for this initial value problem was obtained through the following minimization problem:

For each fixed $t>0$, except possibly for countably many $x \in \textbf{R}^1$, there exists a unique minimizer $y(x,t)$ for

\[
\begin{aligned}
\theta \left( x,y,t \right) := f^* \left( \frac{x-y}{t} \right) + \int_{0}^{y}\ u_0 \left( \xi \right) d\xi,
\end{aligned}
\]
where $f^* \left( z \right) = \max_{p \in \textbf{R}^1} \{ pz - f(p) \}$ denotes the convex conjugate of $f$. 

A weak solution to the above initial value problem was then given by

\[
\begin{aligned}
\overline{u} (x,t) = \left( f^* \right)^{-1} \left( \frac{x-y(x,t)}{t} \right),
\end{aligned}
\]
which was defined for a.e. $x \in \textbf{R}^1$.

For scalar conservation laws with bounded nonnegative measures as initial data, there are many published works (see Bertsch et. al. \cite{ber-sm-tr-tes-20}, Demengel and Serre \cite{dem-serre-91}, Liu and Pierre \cite{liu-pie-84} and the references given there). These works are concerned with the study of scalar conservation laws of the form

\[
\begin{aligned}
u_{t}+ ( \phi(u) )_x = 0,
\end{aligned}
\]
with the initial data at $t=0$, namely

\[
\begin{aligned}
u|_{t=0} = u_0,
\end{aligned}
\]
being a bounded measure. The existence and uniqueness of solutions depend on this initial data, on whether the solution is nonnegative or not and on whether the flux $\phi$ is odd or convex. It is natural to expect a smoothing effect on the solution because of the strong nonlinearity of the flux $\phi$. 

The vanishing viscosity method has been previously applied by Joseph \cite{ktj-93} for the system \eqref{intro-1} under Riemann-type initial data, namely

\[
\begin{aligned}
u^\epsilon_t + \left({(u^\epsilon)^2}/{2} \right)_x &=\frac{\epsilon}{2} u^\epsilon_{xx},\ \rho^\epsilon_t + (\rho^\epsilon u^\epsilon)_x = \frac{\epsilon}{2} \rho^\epsilon_{xx},
\\
\left( u^\epsilon(x,0) , \rho^\epsilon(x,0) \right) &= \begin{cases}
\left( u_{{}_{L}} , \rho_{{}_{L}} \right),\ &x<0,
\\
\left( u_{{}_{R}} , \rho_{{}_{R}} \right),\ &x>0.
\end{cases}
\end{aligned} 
\]
The given problem was subsequently reduced to the new system

\[
\begin{aligned}
U_t^\epsilon + \frac{\left( U_x^\epsilon \right)^2}{2} = \frac{\epsilon}{2} U_{xx}^\epsilon,\ R_t^\epsilon + R_x^\epsilon U_x^\epsilon = \frac{\epsilon}{2} R_{xx}^\epsilon,
\end{aligned}
\] 
and the new initial data was

\[
\left( U^\epsilon(x,0) , R^\epsilon(x,0) \right) = \begin{cases}
\left( u_{{}_{L}} x , \rho_{{}_{L}} x \right),\ &x<0,
\\
\left( u_{{}_{R}} x , \rho_{{}_{R}} x \right),\ &x>0.
\end{cases}
\] 
At this point, it was observed that 
\[
\left( \overline{u}^\epsilon , \overline{R}^\epsilon \right) = \left( U_x^\epsilon, R_x^\epsilon \right)
\]
solved \eqref{intro-1} under the initially prescribed Riemann-type data. The first step after this reduction was to linearise the first equation in \eqref{intro-1} by the Hopf-Cole transformation

\[
V^\epsilon = e^{- \frac{U^\epsilon}{\epsilon}}.
\]
This resulted in the consideration of the linear problem

\[
\begin{aligned}
V_t^\epsilon &= V_{xx}^\epsilon,
\\
V^\epsilon (x,0) &= \begin{cases}
e^{- \frac{u_{{}_{L}} x}{\epsilon}},\ &x < 0,
\\
e^{- \frac{u_{{}_{R}} x}{\epsilon}},\ &x > 0.
\end{cases}
\end{aligned}
\]
The second equation in \eqref{intro-1} involving $\rho$ was linearised by a modified Hopf-Cole transformation

\[
S^\epsilon = R^\epsilon\ e^{- \frac{U^\epsilon}{\epsilon}}.
\]
The corresponding linear problem would be

\[
\begin{aligned}
S_t^\epsilon &= S_{xx}^\epsilon,
\\
S^\epsilon (x,0) &= \begin{cases}
\rho_{{}_{L}} x\ e^{- \frac{u_{{}_{L}} x}{\epsilon}},\ &x < 0,
\\
\rho_{{}_{R}} x\ e^{- \frac{u_{{}_{R}} x}{\epsilon}},\ &x > 0.
\end{cases}
\end{aligned}
\]
These two linear problems could then be explicitly solved using the heat kernel and the $\left( u^\epsilon , \rho^\epsilon \right)$ were recovered using the relations

\[
\left( u^\epsilon , \rho^\epsilon \right) = \left( - \epsilon \cdot \frac{V_x^\epsilon}{V^\epsilon} , \left( \frac{S^\epsilon}{V^\epsilon} \right)_x \right).
\]
The above strategy has be used in \cite{das-ktj-submitted} for the problem
\[
\begin{aligned}
u_t + \left( {u^2}/{2} \right)_x &= 0,\ \rho_t + (\rho u)_x = 0,
\\
u|_{t=0} &= u_a\ \delta_{x=a} + u_b\ \delta_{x=b},\ \rho|_{t=0} = \rho_a\ \delta_{x=a} + \rho_b\ \delta_{x=b},
\end{aligned}
\]
where $-\infty < a < b < \infty$ and $u_a$, $u_b$, $\rho_a$, $\rho_b$ are real constants.
\section{Computation of the vanishing viscosity limits}

In this section, we generalize the ideas developed in \cite{das-ktj-submitted} to study the structure of the vanishing viscosity limits for the system

\begin{equation}
u_t + \left( {u^2}/{2} \right)_x = 0,\ \rho_t + (\rho u)_x = 0,\ \left( x , t \right) \in \textbf{R}^1 \times \left( 0 , \infty \right)
\label{comms-1}
\end{equation}
subject to the initial data

\begin{equation}
u|_{t=0} = u_a\ \delta_{x=a} + u_b\ \delta_{x=b},\ \rho|_{t=0} = \rho_c\ \delta_{x=c} + \rho_d\ \delta_{x=d},
\label{comms-2}
\end{equation}
where $-\infty < a < c < b < d < \infty$ and $u_a$, $u_b$, $\rho_c$, $\rho_d$ are real constants. We use the modified adhesion model associated to \eqref{comms-1}-\eqref{comms-2}, namely 

\[
\begin{aligned}
u^\epsilon_t + \left({(u^\epsilon)^2}/{2} \right)_x &=\frac{\epsilon}{2} u^\epsilon_{xx},\ \rho^\epsilon_t + (\rho^\epsilon u^\epsilon)_x = \frac{\epsilon}{2} \rho^\epsilon_{xx},
\\
u^\epsilon|_{t=0} &= u_a\ \delta_{x=a} + u_b\ \delta_{x=b},\ \rho^\epsilon|_{t=0} = \rho_c\ \delta_{x=c} + \rho_d\ \delta_{x=d}.
\end{aligned}
\]

In order to compute the vanishing viscosity limits, we need to consider different cases depending on the relative signs of $u_a$ and $u_b$. We prove the following theorem.

\begin{theorem}

Suppose $a$, $b$, $c$, $d$ are points on the real line ordered according to the inequalities $-\infty < a < c < b < d < \infty$. Given real constants $u_a$, $u_b$, $\rho_c$ and $\rho_d$, let us consider the one-dimensional zero-pressure gas dynamics system

\[
u_t + \left( {u^2}/{2} \right)_x = 0,\ \rho_t + (\rho u)_x = 0,\ \left( x , t \right) \in {\textnormal{\textbf{R}}}^1 \times \left( 0 , \infty \right)
\] 
under the initial data

\[
u|_{t=0} = u_a\ \delta_{x=a} + u_b\ \delta_{x=b},\ \rho|_{t=0} = \rho_c\ \delta_{x=c} + \rho_d\ \delta_{x=d}.
\] 
Suppose $u^\epsilon$, $\rho^\epsilon$ are approximate solutions of the system

\[
\begin{aligned}
u^\epsilon_t + \left( \frac{(u^\epsilon)^2}{2} \right)_x &= \frac{\epsilon}{2} u^\epsilon_{xx},\ \rho^\epsilon_t + ( \rho^\epsilon u^\epsilon )_x = \frac{\epsilon}{2} \rho^\epsilon_{xx},
\\
u^\epsilon|_{t=0} &= u_a\ \delta_{x=a} + u_b\ \delta_{x=b},\ \rho^\epsilon|_{t=0} = \rho_c\ \delta_{x=c} + \rho_d\ \delta_{x=d}.
\end{aligned}
\]
Depending on the relative signs of $u_a$ and $u_b$, we may describe the precise asymptotic behavior of the vanishing viscosity limits $\lim_{\epsilon \rightarrow 0} \left( u^\epsilon , \rho^\epsilon \right)$ as follows:

\textbf{Case 1.} $u_a < 0$, $u_b > 0$
\\	
Consider the curves defined by
	
\[
\begin{aligned}
\gamma_{{}_a} (t) 
	&:= a - \sqrt{- 2 u_a t},\ \gamma_{{}_b} (t) := b + \sqrt{2 u_b t},\ \gamma_{{}_{c}} (t) := c,\ t \geq 0,
	\\
\gamma_{{}_d} (t) 
	&:= \begin{cases}
d,\ &0 \leq t \leq \frac{(d-b)^2}{2 u_b},
	\\
b + \sqrt{2 u_b t},\ &t \geq \frac{(d-b)^2}{2 u_b}. 			
\end{cases}	    
\end{aligned}
\]
Then the velocity component $u$ and the density component $\rho$ can be explicitly described as follows:

\[
\begin{aligned}
u(x,t) &= \begin{cases}
0,\ &x \in ( -\infty , \gamma_{{}_a} (t) ) \cup (a,b) \cup ( \gamma_{{}_b} (t) , \infty ),
\\
\frac{x-a}{t},\ &x \in ( \gamma_{{}_a} (t) , a ),
\\
\frac{x-b}{t},\ &x \in ( b , \gamma_{{}_b} (t) ),
\end{cases}
\\
\rho &= \rho_c\ \delta_{x = \gamma_{{}_{c}} (t)} + \rho_d\ \delta_{x = \gamma_{{}_d} (t)}.
\end{aligned}
\]	

\textbf{Case 2.} $u_a > 0$, $u_b > 0$
\\	
Consider the curves defined by
	
\[
\begin{aligned}
\gamma_{{}_{a}} (t) &:= \begin{cases}
a + \sqrt{2 u_a t},\ &0 \leq t \leq t_{{}_{a,1}} := \frac{(b-a)^2}{2 u_a},
\\
\frac{a+b}{2} + \frac{u_a}{b-a} t,\ &t_{{}_{a,1}} \leq t \leq t_{{}_{a,2}} := \frac{(d-a)^2}{2 \left( \sqrt{u_a + u_b} - \sqrt{u_b} \right)^2},
\\
a + \sqrt{2 ( u_a + u_b ) t},\ &t \geq t_{{}_{a,2}},
\end{cases}
\\
\\
\gamma_{{}_{b,1}} (t) &:= \begin{cases}
b,\ &0 \leq t \leq t_{{}_{b,1}} := \frac{(b-a)^2}{2 u_a},
\\
\frac{a+b}{2} + \frac{u_a}{b-a} t,\ &t_{{}_{b,1}} \leq t \leq t_{{}_{b,2}} := \frac{(b-a)^2}{2 \left( \sqrt{u_a + u_b} - \sqrt{u_b} \right)^2},
\\
a + \sqrt{2 ( u_a + u_b ) t},\ &t \geq t_{{}_{b,2}},
\end{cases}
\\
\\
\gamma_{{}_{b,2}} (t) &:= \begin{cases}
b + \sqrt{2 u_b t},\ &0 \leq t \leq t_{{}_{b,2}},
\\
a + \sqrt{2 ( u_a + u_b ) t},\ &t \geq t_{{}_{b,2}},
\end{cases}
\\
\\
\gamma_{{}_{c}} (t) &:= \begin{cases}
c,\ &0 \leq t \leq t_{{}_{c,1}} := \frac{(c-a)^2}{2 u_a},
\\
a + \sqrt{2 u_a t},\ &t_{{}_{c,1}} \leq t \leq t_{{}_{c,2}} := \frac{(b-a)^2}{2 u_a},
\\
\frac{a+b}{2} + \frac{u_a}{b-a} t,\ &t_{{}_{c,2}} \leq t \leq t_{{}_{c,3}} := \frac{(d-a)^2}{2 \left( \sqrt{u_a + u_b} - \sqrt{u_b} \right)^2},
\\
a + \sqrt{2 ( u_a + u_b ) t},\ &t \geq t_{{}_{c,3}}.
\end{cases} 
\end{aligned}
\]
Let us also consider the point

\[
\left( x^* , t^* \right) := \left( b + \frac{\left( b-a \right) \sqrt{u_b}}{\sqrt{u_a + u_b} - \sqrt{u_b}} , \frac{(b-a)^2}{2 \left( \sqrt{u_a + u_b} - \sqrt{u_b} \right)^2} \right).
\]
Depending on the three subcases $d < x^*$, $d = x^*$ and $d > x^*$, we define a curve $x = \gamma_{{}_{d}} (t)$ in the upper-half plane as follows:

\subsection*{Subcase 1. $d < x^*$}

\[
\begin{aligned}
\gamma_{{}_{d}} (t) := \begin{cases}
d,\ &0 \leq t \leq t_{{}_{d,1}} := \frac{(d-b)^2}{2 u_b},
\\
b + \sqrt{2 u_b t},\ &t_{{}_{d,1}} \leq t \leq t_{{}_{d,2}} := t^{*},
\\
a + \sqrt{2 ( u_a + u_b ) t},\ &t \geq t_{{}_{d,2}}.
\end{cases}
\end{aligned} 
\]

\subsection*{Subcase 2. $d = x^*$}

\[
\begin{aligned}
\gamma_{{}_{d}} (t) := \begin{cases}
d,\ &0 \leq t \leq t^*,
\\
a + \sqrt{2 ( u_a + u_b ) t},\ &t \geq t^*,
\end{cases}
\end{aligned} 
\]

\subsection*{Subcase 3. $d > x^*$}

\[
\begin{aligned}
\gamma_{{}_{d}} (t) := \begin{cases}
d,\ &0 \leq t \leq t_{{}_{d}} := \frac{(d-b)^2}{2 u_b},
\\
a + \sqrt{2 ( u_a + u_b ) t},\ &t \geq t_{{}_{d}},
\end{cases}
\end{aligned}
\]
Then the velocity component $u$ and the density component $\rho$ can be explicitly described as follows:

\[
\begin{aligned}
u(x,t) &= \begin{cases}
0,\ &x \in \left( -\infty , a \right) \cup \left( \gamma_{{}_{a}} (t) , \gamma_{{}_{b,1}} (t) \right) \cup \left( \gamma_{{}_{b,2}} (t) , \infty \right),
\\
\frac{x-a}{t},\ &x \in \left( a , \gamma_{{}_{a}} (t) \right),
\\
\frac{x-b}{t},\ &x \in \left( \gamma_{{}_{b,1}} (t) , \gamma_{{}_{b,2}} (t) \right),
\end{cases}
\\
\rho &= \rho_c\ \delta_{x = \gamma_{{}_{c}} (t)} + \rho_d\ \delta_{x = \gamma_{{}_d} (t)}.
\end{aligned}
\]	

\textbf{Case 3.} $u_a > 0$, $u_b < 0$, $u_a + u_b > 0$ 
	
\begin{footnotesize}
\[
\begin{aligned}
\gamma_{{}_{a}} (t) &:= \begin{cases}
a + \sqrt{2 u_a t},\ &0 \leq t \leq t_{{}_{a,1}} := \frac{(b-a)^2}{2 \left( \sqrt{u_a} + \sqrt{- u_b} \right)^2},
\\
\frac{a+b}{2} + \frac{u_a + u_b}{b-a} t,\ &t_{{}_{a,1}} \leq t \leq t_{{}_{a,2}} := \frac{(b-a)^2}{2 ( u_a + u_b )},
\\
a + \sqrt{2 ( u_a + u_b ) t},\ &t \geq t_{{}_{a,2}},
\end{cases}
\\
\\
\gamma_{{}_{b,1}} (t) &:= \begin{cases}
b - \sqrt{- 2 u_b t},\ &0 \leq t \leq t_{{}_{b,1}} := \frac{(b-a)^2}{2 \left( \sqrt{u_a} + \sqrt{- u_b} \right)^2},
\\
\frac{a+b}{2} + \frac{u_a + u_b}{b-a} t,\ &t_{{}_{b,1}} \leq t \leq t_{{}_{b,2}} := \frac{(b-a)^2}{2 ( u_a + u_b )},
\\
a + \sqrt{2 ( u_a + u_b ) t},\ &t \geq t_{{}_{b,2}},
\end{cases}
\\
\\
\gamma_{{}_{b,2}} (t) &:= \begin{cases}
b,\ &0 \leq t \leq t_{{}_{b,2}},
\\
a + \sqrt{2 ( u_a + u_b ) t},\ &t \geq t_{{}_{b,2}}
\end{cases}
\\
\gamma_{{}_{d}} (t) &:= \begin{cases}
d,\ &0 \leq t \leq t_{{}_{d,1}} := \frac{(d-a)^2}{2 ( u_a + u_b )},
\\
a + \sqrt{2 ( u_a + u_b ) t},\ &t \geq t_{{}_{d,1}}.
\end{cases}
\end{aligned}
\]
\end{footnotesize}
To proceed further, we have to consider the point

\[
\begin{aligned}
\left( x^* , t^* \right) := \left( a + \frac{\left( b-a \right) \sqrt{u_a}}{\sqrt{u_a} + \sqrt{- u_b}} , \frac{(b-a)^2}{2 \left( \sqrt{u_a} + \sqrt{- u_b} \right)^2} \right),
\end{aligned}
\]
and depending on the three subcases $c < x^*$, $c = x^*$ and $c > x^*$, we define a curve $x = \gamma_{{}_{c}} (t)$ in the upper-half plane as follows:

\subsection*{Subcase 1. $c < x^*$}

\[
\begin{aligned}
\gamma_{{}_{c}} (t) := \begin{cases}
c,\ &0 \leq t \leq t_{{}_{c,1}} := \frac{(c-a)^2}{2 u_a},
\\
a + \sqrt{2 u_a t},\ &t_{{}_{c,1}} \leq t \leq t_{{}_{c,2}} := t^*,
\\
\frac{a+b}{2} + \frac{u_a + u_b}{b-a} t,\ &t_{{}_{c,2}} \leq t \leq t_{{}_{c,3}} := \frac{(b-a)^2}{2 ( u_a + u_b )},
\\
a + \sqrt{2 ( u_a + u_b ) t},\ &t \geq t_{{}_{c,3}}.
\end{cases}
\end{aligned}
\]

\subsection*{Subcase 2. $c = x^*$}

\[
\begin{aligned}
\gamma_{{}_{c}} (t) := \begin{cases}
c,\ &0 \leq t \leq t_{{}_{c,1}} := t^*,
\\
\frac{a+b}{2} + \frac{u_a + u_b}{b-a} t,\ &t_{{}_{c,1}} \leq t \leq t_{{}_{c,2}} := \frac{(b-a)^2}{2 ( u_a + u_b )},
\\
a + \sqrt{2 ( u_a + u_b ) t},\ &t \geq t_{{}_{c,2}}.
\end{cases}
\end{aligned}
\]

\subsection*{Subcase 3. $c > x^*$}

\[
\begin{aligned}
\gamma_{{}_{c}} (t) := \begin{cases}
c,\ &0 \leq t \leq t_{{}_{c,1}} := \frac{(b-c)^2}{- 2 u_b},
\\
b - \sqrt{- 2 u_b t},\ &t_{{}_{c,1}} \leq t \leq t_{{}_{c,2}} := t^*,
\\
\frac{a+b}{2} + \frac{u_a + u_b}{b-a} t,\ &t_{{}_{c,2}} \leq t \leq t_{{}_{c,3}} := \frac{(b-a)^2}{2 ( u_a + u_b )},
\\
a + \sqrt{2 ( u_a + u_b ) t},\ &t \geq t_{{}_{c,3}}.
\end{cases}
\end{aligned}
\]
Then the velocity component $u$ and the density component $\rho$ can be explicitly described as follows:

\[
\begin{aligned}
u(x,t) &= \begin{cases}
0,\ &x \in \left( -\infty , a \right) \cup \left( \gamma_{{}_{a}} (t) , \gamma_{{}_{b,1}} (t) \right) \cup \left( \gamma_{{}_{b,2}} (t) , \infty \right),
\\
\frac{x-a}{t},\ &x \in \left( a , \gamma_{{}_{a}} (t) \right) ,
\\
\frac{x-b}{t},\ &x \in \left( \gamma_{{}_{b,1}} (t) , \gamma_{{}_{b,2}} (t) \right),
\end{cases}
\\
\rho &= \rho_c\ \delta_{x = \gamma_{{}_{c}} (t)} + \rho_d\ \delta_{x = \gamma_{{}_d} (t)}.
\end{aligned}
\]

\textbf{Case 4.} $u_a > 0$, $u_b < 0$, $u_a + u_b < 0$
\\	
Consider the curves defined by
	
\[
\begin{aligned}
\gamma_{{}_{a,1}} (t) &:= \begin{cases}
a,\ &0 \leq t \leq t_{{}_{a,2}} := \frac{(b-a)^2}{- 2 ( u_a + u_b )},
\\
b - \sqrt{- 2 ( u_a + u_b ) t},\ &t \geq t_{{}_{a,2}},
\end{cases}
\\
\\
\gamma_{{}_{a,2}} (t) &:= \begin{cases}
a + \sqrt{2 u_a t},\ &0 \leq t \leq t_{{}_{a,1}} := \frac{(b-a)^2}{2 \left( \sqrt{u_a} + \sqrt{- u_b} \right)^2},
\\
\frac{a+b}{2} + \frac{u_a + u_b}{b-a} t,\ &t_{{}_{a,1}} \leq t \leq t_{{}_{a,2}},
\\
b - \sqrt{- 2 ( u_a + u_b ) t},\ &t \geq t_{{}_{a,2}},
\end{cases}
\\
\\
\gamma_{{}_{b}} (t) &:= \begin{cases}
b - \sqrt{- 2 u_b t},\ &0 \leq t \leq t_{{}_{b,1}} := \frac{(b-a)^2}{2 \left( \sqrt{u_a} + \sqrt{- u_b} \right)^2}
\\
\frac{a+b}{2} + \frac{u_a + u_b}{b-a} t,\ &t_{{}_{b,1}} \leq t \leq t_{{}_{b,2}} := \frac{(b-a)^2}{- 2 ( u_a + u_b )},
\\
b - \sqrt{- 2 ( u_a + u_b ) t},\ &t \geq t_{{}_{b,2}},
\end{cases}
\\
\\
\gamma_{{}_{d}} (t) &:= d,\ t \geq 0.
\end{aligned}
\]
Let us also consider the coordinates

\[
\begin{aligned}
\left( x^* , t^* \right) := \left( a + \frac{\left( b-a \right) \sqrt{u_a}}{\sqrt{u_a} + \sqrt{- u_b}} , \frac{(b-a)^2}{2 \left( \sqrt{u_a} + \sqrt{-u_b} \right)^2} \right),
\end{aligned}
\]
and depending on three subcases $c < x^*$, $c = x^*$ and $c > x^*$, we introduce a curve $x = \gamma_{{}_{c}} (t)$ in the upper-half plane as follows:

\subsection*{Subcase 1. $c < x^*$}

\[
\begin{aligned}
\gamma_{{}_{c}} (t) := \begin{cases}
c,\ &0 \leq t \leq t_{{}_{c,1}} := \frac{(c-a)^2}{2 u_a},
\\
a + \sqrt{2 u_a t},\ &t_{{}_{c,1}} \leq t \leq t_{{}_{c,2}} := t^*,
\\
\frac{a+b}{2} + \frac{u_a + u_b}{b-a} t,\ &t_{{}_{c,2}} \leq t \leq t_{{}_{c,3}} := \frac{(b-a)^2}{- 2 ( u_a + u_b )},
\\
b - \sqrt{- 2 ( u_a + u_b ) t},\ &t \geq t_{{}_{c,3}}.
\end{cases}
\end{aligned}
\]

\subsection*{Subcase 2. $c = x^*$}

\[
\begin{aligned}
\gamma_{{}_{c}} (t) := \begin{cases}
c,\ &0 \leq t \leq t_{{}_{c,1}} := t^*,
\\
\frac{a+b}{2} + \frac{u_a + u_b}{b-a} t,\ &t_{{}_{c,1}} \leq t \leq t_{{}_{c,2}} := \frac{(b-a)^2}{- 2 ( u_a + u_b )},
\\
b - \sqrt{- 2 ( u_a + u_b ) t},\ &t \geq t_{{}_{c,2}}.
\end{cases}
\end{aligned}
\]

\subsection*{Subcase 3. $c > x^*$}

\[
\begin{aligned}
\gamma_{{}_{c}} (t) := \begin{cases}
c,\ &0 \leq t \leq t_{{}_{c,1}} := \frac{(b-c)^2}{- 2 u_b},
\\
b - \sqrt{- 2 u_b t},\ &t_{{}_{c,1}} \leq t \leq t_{{}_{c,2}} := t^*,
\\
\frac{a+b}{2} + \frac{u_a + u_b}{b-a} t,\ &t_{{}_{c,2}} \leq t \leq t_{{}_{c,3}} := \frac{(b-a)^2}{- 2 ( u_a + u_b )},
\\
b - \sqrt{- 2 ( u_a + u_b ) t},\ &t \geq t_{{}_{c,3}}.
\end{cases}
\end{aligned}
\]
Then the velocity component $u$ and the density component $\rho$ can be explicitly described as follows:

\[
\begin{aligned}
u(x,t) &= \begin{cases}
0,\ &x \in \left( -\infty , \gamma_{{}_{a,1}} (t) \right) \cup \left( \gamma_{{}_{a,2}} (t) , \gamma_{{}_{b}} (t) \right) \cup \left( b , \infty \right),
\\
\frac{x-a}{t},\ &x \in \left( \gamma_{{}_{a,1}} (t) , \gamma_{{}_{a,2}} (t) \right),
\\
\frac{x-b}{t},\ &x \in \left( \gamma_{{}_{b}} (t) , b \right),
\end{cases}
\\
\rho &= \rho_c\ \delta_{x = \gamma_{{}_{c}} (t)} + \rho_d\ \delta_{x = \gamma_{{}_d} (t)}.
\end{aligned}
\]

\textbf{Case 5.} $u_a > 0$, $u_b < 0$, $u_a + u_b = 0$
\\	
Consider the curves defined by
	
\[
\begin{aligned}
\gamma_{{}_{a}} (t) &:= \begin{cases} 
a + \sqrt{2 u_a t},\ &0 \leq t \leq t_{{}_{a,1}} := \frac{(b-a)^2}{2 \left( \sqrt{u_a} + \sqrt{- u_b} \right)^2},
\\
\frac{a+b}{2},\ &t \geq t_{{}_{a,1}},
\end{cases}
\\
\\
\gamma_{{}_{b}} (t) &:= \begin{cases} 
b - \sqrt{- 2 u_b t},\ &0 \leq t \leq t_{{}_{b,1}} := \frac{(b-a)^2}{2 \left( \sqrt{u_a} + \sqrt{- u_b} \right)^2},
\\
\frac{a+b}{2},\ &t \geq t_{{}_{b,1}}.
\end{cases} 
\end{aligned}
\]

Depending on the three subcases $c < \frac{a+b}{2}$, $c = \frac{a+b}{2}$ and $c > \frac{a+b}{2}$, we consider a curve $x = \gamma_{{}_{c}} (t)$ defined in the upper-half plane as follows:

\subsection*{Subcase 1. $c < \frac{a+b}{2}$}
 
\[
\begin{aligned}
\gamma_{{}_{c}} (t) := \begin{cases}
c,\ &0 \leq t \leq t_{{}_{c,1}} := \frac{(c-a)^2}{2 u_a},
\\
a + \sqrt{2 u_a t},\ &t_{{}_{c,1}} \leq t \leq t_{{}_{c,2}} := \frac{(b-a)^2}{2 \left( \sqrt{u_a} + \sqrt{- u_b} \right)^2},
\\
\frac{a+b}{2},\ &t \geq t_{{}_{c,2}}.
\end{cases}
\end{aligned}
\]

\subsection*{Subcase 2. $c = \frac{a+b}{2}$}
\[
\gamma_{{}_{c}} (t):= \frac{a+b}{2},\ t \geq 0.
\]

\subsection*{Subcase 3. $c > \frac{a+b}{2}$}

\[
\begin{aligned}
\gamma_{{}_{c}} (t) := \begin{cases}
c,\ &0 \leq t \leq t_{{}_{c,1}} := \frac{(b-c)^2}{- 2 u_b},
\\
b - \sqrt{- 2 u_b t},\ &t_{{}_{c,1}} \leq t \leq t_{{}_{c,2}} := \frac{(b-a)^2}{2 \left( \sqrt{u_a} + \sqrt{- u_b} \right)^2},
\\
\frac{a+b}{2},\ &t \geq t_{{}_{c,2}}.
\end{cases}
\end{aligned}
\]
Then the velocity component $u$ and the density component $\rho$ have the following explicit representations:

\[
\begin{aligned}
u(x,t) &= \begin{cases}
0,\ &x \in \left( -\infty , a \right) \cup \left( \gamma_{{}_{a}} (t) , \gamma_{{}_{b}} (t) \right) \cup \left( b , \infty \right),
\\
\frac{x-a}{t},\ &x \in \left( a , \gamma_{{}_{a}} (t) \right),
\\
\frac{x-b}{t},\ &x \in \left( \gamma_{{}_{b}} (t) , b \right),
\end{cases}
\\
\rho &= \rho_c\ \delta_{x = \gamma_{{}_{c}} (t)} + \rho_d\ \delta_{x = \gamma_{{}_d} (t)}.
\end{aligned}
\]

\textbf{Case 6.} $u_a < 0$, $u_b < 0$
\\	
Consider the curves defined by
	
\[
\begin{aligned}
\gamma_{{}_{a,1}} (t)
	&:= \begin{cases}
a - \sqrt{- 2 u_a t},\ &0 \leq t \leq t_{a,1} := \frac{(b-a)^2}{2 \left( \sqrt{- ( u_a + u_b )} - \sqrt{- u_a} \right)^2},
	\\
b - \sqrt{- 2 ( u_a + u_b ) t},\ &t \geq t_{a,1},
\end{cases}	
	\\
	\\
\gamma_{{}_{a,2}} (t)
	&:= \begin{cases}
a,\ &0 \leq t \leq t_{a,2} := \frac{(b-a)^2}{- 2 u_b},
	\\
\frac{a+b}{2} + \frac{u_b}{b-a} t,\ &t_{a,2} \leq t \leq t_{a,3} := \frac{(b-a)^2}{2 \left( \sqrt{- ( u_a + u_b )} - \sqrt{- u_a} \right)^2},
	\\
b - \sqrt{- 2 ( u_a + u_b ) t},\ &t \geq t_{a,3},
\end{cases}	
	\\
	\\
\gamma_{{}_b} (t)
	&:= \begin{cases}
b - \sqrt{- 2 u_b t},\ &0 \leq t \leq t_{b,1} := \frac{(b-a)^2}{- 2 u_b},
	\\
\frac{a+b}{2} + \frac{u_b}{b-a} t,\ &t_{b,1} \leq t \leq t_{b,2} := \frac{(b-a)^2}{2 \left( \sqrt{- ( u_a + u_b )} - \sqrt{- u_a} \right)^2},
	\\
b - \sqrt{- 2 ( u_a + u_b ) t},\ &t \geq t_{b,2},
\end{cases}	
	\\
	\\
\gamma_{{}_c} (t)
	&:= \begin{cases}
c,\ &0 \leq t \leq t_{c,1} := \frac{(b-c)^2}{- 2 u_b},	
	\\
b - \sqrt{- 2 u_b t},\ &t_{c,1} \leq t \leq t_{c,2} := \frac{(b-a)^2}{- 2 u_b},
	\\
\frac{a+b}{2} + \frac{u_b}{b-a} t,\ &t_{c,2} \leq t \leq t_{c,3} := \frac{(b-a)^2}{2 \left( \sqrt{- ( u_a + u_b )} - \sqrt{- u_a} \right)^2},
	\\
b - \sqrt{- 2 ( u_a + u_b ) t},\ &t \geq t_{c,3},
\end{cases}	
\\
\\
\gamma_{{}_{d}} (t) &:= d,\ t \geq 0. 
\end{aligned}
\]
Then the velocity component $u$ and the density component $\rho$ can be explicitly described as follows:

\[
\begin{aligned}
u(x,t) &= \begin{cases}
0,\ &x \in \Big( -\infty , \gamma_{{}_{a,1}} (t) \Big) \cup \Big( \gamma_{{}_{a,2}} (t) , \gamma_{{}_b} (t) \Big) \cup \Big( b , \infty \Big),
\\
\frac{x-a}{t},\ &x \in \Big( \gamma_{{}_{a,1}} (t) , \gamma_{{}_{a,2}} (t) \Big),
\\
\frac{x-b}{t},\ & x \in \Big( \gamma_{{}_b} (t) , b \Big),
\end{cases}
\\
\rho &= \rho_c\ \delta_{x = \gamma_{{}_{c}} (t)} + \rho_d\ \delta_{x = \gamma_{{}_d} (t)}.
\end{aligned}
\]

\end{theorem}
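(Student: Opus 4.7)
The plan is to follow the scheme used in \cite{ktj-93} and \cite{das-ktj-submitted}, namely to reduce the viscous system to a pair of linear heat equations via integration and a Hopf--Cole substitution, solve them explicitly, and then perform a careful Laplace-type asymptotic analysis as $\epsilon \to 0$. Concretely, I would first introduce potentials $U^\epsilon$ and $R^\epsilon$ satisfying $U^\epsilon_x = u^\epsilon$, $R^\epsilon_x = \rho^\epsilon$, for which the initial data are the piecewise-constant Heaviside combinations
\[
U(x,0)=u_a H(x-a)+u_b H(x-b), \qquad R(x,0)=\rho_c H(x-c)+\rho_d H(x-d).
\]
Under the standard transformations $V^\epsilon=e^{-U^\epsilon/\epsilon}$ and $S^\epsilon=R^\epsilon e^{-U^\epsilon/\epsilon}$, both $V^\epsilon$ and $S^\epsilon$ satisfy the linear heat equation, and the original pair is recovered through $u^\epsilon=-\epsilon V^\epsilon_x/V^\epsilon$ and $\rho^\epsilon=(S^\epsilon/V^\epsilon)_x$.

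Because the initial data for $V^\epsilon$ and $S^\epsilon$ are piecewise constant on the intervals determined by the ordering $a<c<b<d$, the heat-kernel convolution reduces to finite sums of complementary error functions. The representation of $u^\epsilon$ therefore becomes a ratio in which numerator and denominator are each a sum of three terms of the form $c_j\,e^{-\Phi_j(x,t,y_j)/\epsilon}$, with quadratic phases $\Phi_j$ coming from the heat kernel and linear contributions from the step heights $u_a$, $u_a+u_b$. Next, I would invoke the classical asymptotics $\mathrm{erfc}(z)\sim e^{-z^2}/(z\sqrt{\pi})$ at $+\infty$ and $\mathrm{erfc}(z)\to 2$ at $-\infty$ to extract, for each fixed $(x,t)$, the dominant exponent. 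This is a one-dimensional saddle-point problem over the three break points $\{a,b\}$ combined with the two signed masses; the boundaries in the $(x,t)$ plane where the dominant phase switches are precisely the curves $\gamma_a$, $\gamma_b$, $\gamma_{a,1}$, $\gamma_{a,2}$, $\gamma_{b,1}$, $\gamma_{b,2}$ etc.\ appearing in the statement, and they can be obtained by equating pairs of phases and solving the resulting quadratic equations in $x-a$, $x-b$. In the ``open'' regions between these separatrices one recovers $u=0$, and in the ``fan'' regions a single corner realises the minimum, giving the linear profiles $(x-a)/t$ or $(x-b)/t$.

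For the density, the ratio $S^\epsilon/V^\epsilon$ is analyzed the same way: in each region determined by $u^\epsilon$, only one pair (numerator term, denominator term) dominates, and their quotient either vanishes, equals $\rho_c$, equals $\rho_d$, or equals $\rho_c+\rho_d$, with transitions happening along specific curves. Differentiating this piecewise constant-in-$x$ function gives $\rho$ as two Dirac masses travelling along curves defined by the matching condition between the relevant $V^\epsilon$-phase and the $S^\epsilon$-phase; these matching conditions are exactly those defining $\gamma_c$ and $\gamma_d$, the critical junction time being the one at which the curve carrying the $\delta$ meets a rarefaction boundary, the opposite concentration, or the coalescence line $a+\sqrt{2(u_a+u_b)t}$ (resp.\ $b-\sqrt{-2(u_a+u_b)t}$). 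The subcase dichotomies $c<x^*$, $c=x^*$, $c>x^*$ (and likewise for $d$) simply reflect whether the $\delta$-trajectory reaches the merged curve before or after the two velocity concentrations collide at $(x^*,t^*)$.

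The main obstacle is purely bookkeeping rather than conceptual: one has six cases according to the signs of $u_a$, $u_b$, $u_a+u_b$, and within several of them three subcases according to the geometric position of $c$ or $d$ relative to the collision point $x^*$. In each configuration one must check that the claimed piecewise curves satisfy at every node the correct phase-matching condition (so that two dominant terms in $V^\epsilon$ coincide in size), that strict inequalities hold in the interiors of the intervals (so that the identified term genuinely dominates), and that the density trajectory hits the rarefaction boundary or the merged shock at the asserted junction times. I expect the hardest accounting to be Case~6 (and the symmetric portion of Case~2), where three separatrices emanating from $a$ and $b$ must be tracked simultaneously together with a $\delta$-trajectory that crosses two of them before joining the combined line, requiring a delicate comparison of three quadratic phases.
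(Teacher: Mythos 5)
Your proposal follows essentially the same route as the paper: pass to the potentials $U^\epsilon$, $R^\epsilon$ with Heaviside-type initial data, linearize by the Hopf--Cole substitutions $V^\epsilon=e^{-U^\epsilon/\epsilon}$, $S^\epsilon=R^\epsilon e^{-U^\epsilon/\epsilon}$, solve the heat equations explicitly as erfc sums, determine the limit region by region via comparison of the quadratic phases (the separatrices being exactly the stated curves), and recover $\rho$ as the distributional $x$-derivative of the piecewise constant limit of $S^\epsilon/V^\epsilon$. The only cosmetic caveat is that the paper's erfc is the unnormalized integral $\int_z^\infty e^{-s^2}\,ds$, so the relevant asymptotics are $\mathrm{erfc}(z)\sim e^{-z^2}/(2z)$ and $\mathrm{erfc}(-\infty)=\sqrt{\pi}$ rather than the standard $e^{-z^2}/(z\sqrt{\pi})$ and $2$, which changes nothing in the argument.
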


\begin{proof}

The first step is to consider the modified adhesion model corresponding to the problem \eqref{comms-1}-\eqref{comms-2}, namely

\begin{equation}
u^\epsilon_t + \left({(u^\epsilon)^2}/{2} \right)_x =\frac{\epsilon}{2} u^\epsilon_{xx},\ \rho^\epsilon_t + (\rho^\epsilon u^\epsilon)_x = \frac{\epsilon}{2} \rho^\epsilon_{xx},
\label{comms-3}
\end{equation}
with initial data

\begin{equation}
u^\epsilon|_{t=0} = u_a\ \delta_{x=a} + u_b\ \delta_{x=b},\ \rho^\epsilon|_{t=0} = \rho_c\ \delta_{x=c} +\rho_d\ \delta_{x=d}.
\label{comms-4}
\end{equation}
So, if $\left( U^\epsilon , R^\epsilon \right)$ is a solution to the system

\begin{equation}
U_t^\epsilon + \frac{\left( U_x^\epsilon \right)^2}{2} = \frac{\epsilon}{2} U_{xx}^\epsilon,\ R_t^\epsilon + R_x^\epsilon U_x^\epsilon = \frac{\epsilon}{2} R_{xx}^\epsilon
\label{comms-5}
\end{equation}
under the initial conditions

\begin{equation}
\begin{aligned}
\\
U^\epsilon (x,0) &= \begin{cases}
0,\ &{x < a},
\\
u_a,\ &{a < x < b},
\\
u_a + u_b,\ &{x > b},
\end{cases}
\\
R^\epsilon (x,0) &= \begin{cases}
0,\ &{x < c},
\\
\rho_c,\ &{c < x < d},
\\
\rho_c + \rho_d,\ &{x > d},
\end{cases}
\end{aligned}
\label{comms-6}
\end{equation}
then $\left( \overline{u}^\epsilon , \overline{\rho}^\epsilon \right) = \left( U_x^\epsilon , R_x^\epsilon \right)$ will solve the problem \eqref{comms-3}-\eqref{comms-4}.

Using the generalized Hopf-Cole transformations

\[
V^\epsilon := e^{- \frac{U^\epsilon}{\epsilon}},\ S^\epsilon := R^\epsilon\ e^{- \frac{U^\epsilon}{\epsilon}},
\]
we see that $V^\epsilon$ and $S^\epsilon$ satisfy

\begin{equation}
V_t^\epsilon = \frac{\epsilon}{2} V_{xx}^\epsilon,\ S_t^\epsilon = \frac{\epsilon}{2} S_{xx}^\epsilon
\label{comms-7}
\end{equation}
under the initial conditions

\begin{align}
V^\epsilon (x,0) &= \begin{cases}
1,\ &{ x < a },
\\
e^{-\frac{u_a}{\epsilon}},\ &{ a < x < b },
\\
e^{-\frac{u_a + u_b}{\epsilon}},\ &{ x > b },
\end{cases}
\label{comms-8}
\\
S^\epsilon (x,0) &= \begin{cases}
0,\ &{ x < c },
\\
\rho_c\ e^{-\frac{u_a}{\epsilon}},\ &{ c < x < b },
\\
\rho_c\ e^{-\frac{u_a + u_b}{\epsilon}},\ &{ b < x < d },
\\
( \rho_c + \rho_d )\ e^{-\frac{u_a + u_b}{\epsilon}},\ &{ x > d }.
\end{cases}
\label{comms-9}
\end{align}
Therefore, the explicit solutions to the linear system \eqref{comms-7} under the initial conditions \eqref{comms-8}-\eqref{comms-9} are given by

\[
\begin{aligned}
\begin{pmatrix}
V^\epsilon (x,t)
\\
\\
S^\epsilon (x,t)
\end{pmatrix} = \begin{pmatrix}
\frac{1}{\sqrt{\pi}} \left[
\sqrt{\pi} - \textnormal{erfc} \left( - \frac{x-a}{\sqrt{2t \epsilon}} \right) + e^{- \frac{u_a}{\epsilon}} \left( \textnormal{erfc} \left( - \frac{x-a}{\sqrt{2t \epsilon}} \right) - \textnormal{erfc} \left( - \frac{x-b}{\sqrt{2t \epsilon}} \right) \right) \right.
\\
+ \left. e^{- \frac{u_a + u_b}{\epsilon}} \textnormal{erfc} \left( - \frac{x-b}{\sqrt{2t \epsilon}} \right) \right]
\\
\\
\frac{1}{\sqrt{\pi}} \left[ \rho_c\ e^{- \frac{u_a}{\epsilon}}\ \left( \textnormal{erfc} \left( - \frac{x-c}{\sqrt{2t \epsilon}} \right) - \textnormal{erfc} \left( - \frac{x-b}{\sqrt{2t \epsilon}} \right) \right) \right.
\\
+ \rho_c\ e^{- \frac{u_a + u_b}{\epsilon}}\ \left( \textnormal{erfc} \left( - \frac{x-b}{\sqrt{2t \epsilon}} \right) - \textnormal{erfc} \left( - \frac{x-d}{\sqrt{2t \epsilon}} \right) \right)
\\
+ \left. ( \rho_c + \rho_d )\ e^{- \frac{u_a + u_b}{\epsilon}}\ \textnormal{erfc} \left( - \frac{x-d}{\sqrt{2t \epsilon}} \right) \right]
\end{pmatrix},
\end{aligned}
\]
where

\[
\textnormal{erfc} \left( z \right) := \int_{z}^{\infty}\ e^{-t^2}\ dt,\ z \in \textbf{R}^1.
\]
Coming back to the problems \eqref{comms-3}-\eqref{comms-4} and \eqref{comms-5}-\eqref{comms-6}, it follows that we have the following explicit representations for $u^\epsilon$ and $R^\epsilon$:

\begin{equation}
\begin{aligned}
u^\epsilon (x,t) 
		&= \frac{\epsilon}{\sqrt{2t \epsilon}} \cdot \frac{\begin{aligned}
e^{ - \frac{(x-a)^2}{2t \epsilon}} \left( 1 - e^{- \frac{u_a}{\epsilon}} \right) + e^{-\frac{(x-b)^2}{2t \epsilon}} \left( e^{- \frac{u_a}{\epsilon}} - e^{- \frac{u_a + u_b}{\epsilon}} \right)
\end{aligned}}{
\begin{aligned}
\sqrt{\pi} - \textnormal{erfc} \left( - \frac{x-a}{\sqrt{2t \epsilon}} \right) + e^{- \frac{u_a}{\epsilon}}\ \left( \textnormal{erfc} \left( - \frac{x-a}{\sqrt{2t \epsilon}} \right) - \textnormal{erfc} \left( - \frac{x-b}{\sqrt{2t \epsilon}} \right) \right) 
\\
+ e^{- \frac{u_a + u_b}{\epsilon}}\ \textnormal{erfc} \left( - \frac{x-b}{\sqrt{2t \epsilon}} \right) 
\end{aligned} },
\label{intro-11}
\end{aligned}
\end{equation}
\\
\\
\begin{equation}
\begin{aligned}
R^\epsilon (x,t) &= \frac{
\begin{aligned}
\rho_c\ e^{- \frac{u_a}{\epsilon}}\ \left( \textnormal{erfc} \left( - \frac{x-c}{\sqrt{2t \epsilon}} \right) - \textnormal{erfc} \left( - \frac{x-b}{\sqrt{2t \epsilon}} \right) \right) 
\\
+ \rho_c\ e^{- \frac{u_a + u_b}{\epsilon}}\ \left( \textnormal{erfc} \left( - \frac{x-b}{\sqrt{2t \epsilon}} \right) - \textnormal{erfc} \left( - \frac{x-d}{\sqrt{2t \epsilon}} \right) \right)
\\
+ ( \rho_c + \rho_d )\ e^{- \frac{u_a + u_b}{\epsilon}}\ \textnormal{erfc} \left( - \frac{x-d}{\sqrt{2t \epsilon}} \right)
\end{aligned} 
}{ 
\begin{aligned}
\sqrt{\pi} - \textnormal{erfc} \left( - \frac{x-a}{\sqrt{2t \epsilon}} \right) + e^{- \frac{u_a}{\epsilon}}\ \left( \textnormal{erfc} \left( - \frac{x-a}{\sqrt{2t \epsilon}} \right) - \textnormal{erfc} \left( - \frac{x-b}{\sqrt{2t \epsilon}} \right) \right) 
\\
+ e^{- \frac{u_a + u_b}{\epsilon}}\ \textnormal{erfc} \left( - \frac{x-b}{\sqrt{2t \epsilon}} \right) 
\end{aligned} }.
\end{aligned}
\label{intro-12}
\end{equation}
In order to discuss the passage to the limit as $\epsilon \rightarrow 0$, we will use the following properties of the function erfc:

\begin{enumerate}
	
	\item ${\displaystyle{\lim_{z \rightarrow \infty}}} \textnormal{erfc} \left( z \right) = 0$.
	
	\item $\textnormal{erfc} \left( z \right) = \left( \frac{1}{2 z} - \frac{1}{4 z^3} + o \left( \frac{1}{z^3} \right) \right)\ e^{- z^2} \textnormal{ as } z \rightarrow \infty$.
	
	\item ${\displaystyle{\lim_{z \rightarrow \infty}}}\ f(z) = \frac{1}{2}$,

\end{enumerate}
where $f : z \mapsto z\ \textnormal{erfc} \left( z \right)\ e^{z^2}$ for every $z \in \textbf{R}^1$. These properties have been proved in the appendix.

Here we discuss another important property of erfc that will be useful later on. For distinct $p,q,x \in \textbf{R}^1$, consider $P_\epsilon = P_\epsilon (x,t) := \frac{|x-p|}{\sqrt{2t \epsilon}}$ and $Q_\epsilon = Q_\epsilon (x,t) := \frac{|x-q|}{\sqrt{2t \epsilon}}$ for each $\epsilon > 0$, $t>0$. Then using property (3) above, we have
\[
\begin{aligned}
\lim_{\epsilon \rightarrow 0} \frac{\textnormal{erfc} \left( P_\epsilon \right)}{\textnormal{erfc} \left( Q_\epsilon \right)} 
		&= \lim_{\epsilon \rightarrow 0} \frac{f(P_\epsilon)}{f(Q_\epsilon)} \cdot \frac{Q_\epsilon\ e^{Q_\epsilon^2}}{P_\epsilon\ e^{P_\epsilon^2}} 
		\\
		&= \lim_{\epsilon \rightarrow 0} \frac{|x-q|\ f(P_\epsilon)}{|x-p|\ f(Q_\epsilon)} \cdot e^{Q_\epsilon^2 - P_\epsilon^2}
		\\
		&= \begin{cases}
		0,\ &|x-p| > |x-q|,
		\\
		\infty,\ &|x-p| < |x-q|.
		\end{cases}
\end{aligned}
\]
In the computations that follow, we use the following abbreviations for our functions and variables under study:

\[
\begin{aligned}
A_\epsilon = A_\epsilon (x,t) = \frac{|x-a|}{\sqrt{2t \epsilon}},\ B_\epsilon = B_\epsilon (x,t) = \frac{|x-b|}{\sqrt{2t \epsilon}}, 
\\
C_\epsilon = C_\epsilon (x,t) = \frac{|x-c|}{\sqrt{2t \epsilon}},\ D_\epsilon = D_\epsilon (x,t) = \frac{|x-d|}{\sqrt{2t \epsilon}},
\\
u^\epsilon = u^\epsilon (x,t),\ R^\epsilon = R^\epsilon (x,t).
\end{aligned}
\]
First we look into the explicit expressions of $u^\epsilon$ and $R^\epsilon$ in different regions in terms of the new variables $A_\epsilon$, $B_\epsilon$, $C_\epsilon$ and $D_\epsilon$:

\begin{footnotesize}
\subsection*{(1) $x<a$}

\[
\begin{aligned}
	u^\epsilon &= 
		\frac{\epsilon}{\sqrt{2t \epsilon}} \cdot \frac{\begin{aligned}
e^{-{A_\epsilon}^2} \left( 1 - e^{- \frac{u_a}{\epsilon}} \right) + e^{-{B_\epsilon}^2} \left( e^{- \frac{u_a}{\epsilon}} - e^{- \frac{u_a + u_b}{\epsilon}} \right)
\end{aligned}}{
\begin{aligned}
\sqrt{\pi} + {\textnormal{erfc}} \left( {A_\epsilon} \right) \left( e^{- \frac{u_a}{\epsilon}} - 1 \right) + {\textnormal{erfc}} \left( {B_\epsilon} \right) \left( e^{- \frac{u_a + u_b}{\epsilon}} - e^{- \frac{u_a}{\epsilon}} \right)
\end{aligned} },
\\
\\
	R^\epsilon &= 
		\frac{
\begin{aligned}
	\rho_c\ \textnormal{erfc} \left( B_\epsilon \right) \left( e^{- \frac{u_a + u_b}{\epsilon}} - e^{- \frac{u_a}{\epsilon}} \right) 
	\\
	+ \rho_c\ \textnormal{erfc} \left( C_\epsilon \right) e^{- \frac{u_a}{\epsilon}} + \rho_d\ \textnormal{erfc} \left( D_\epsilon \right) e^{- \frac{u_a + u_b}{\epsilon}}
\end{aligned} 
}{ 
\begin{aligned}
\sqrt{\pi} + {\textnormal{erfc}} \left( {A_\epsilon} \right) \left( e^{- \frac{u_a}{\epsilon}} - 1 \right) + {\textnormal{erfc}} \left( {B_\epsilon} \right) \left( e^{- \frac{u_a + u_b}{\epsilon}} - e^{- \frac{u_a}{\epsilon}} \right) 
\end{aligned} }.
\end{aligned}
\]

\subsection*{(2) $a < x < c$}

\[
\begin{aligned}
	u^\epsilon &= 
		\frac{\epsilon}{\sqrt{2t \epsilon}} \cdot \frac{\begin{aligned}
e^{-{A_\epsilon}^2} \left( 1 - e^{- \frac{u_a}{\epsilon}} \right) + e^{-{B_\epsilon}^2} \left( e^{- \frac{u_a}{\epsilon}} - e^{- \frac{u_a + u_b}{\epsilon}} \right)
\end{aligned}}{
\begin{aligned}
\sqrt{\pi}\ e^{- \frac{u_a}{\epsilon}} + {\textnormal{erfc}} \left( {A_\epsilon} \right) \left( 1 - e^{- \frac{u_a}{\epsilon}} \right) + {\textnormal{erfc}} \left( {B_\epsilon} \right) \left( e^{- \frac{u_a + u_b}{\epsilon}} - e^{- \frac{u_a}{\epsilon}} \right)
\end{aligned} },
\\
\\
	R^\epsilon &= 
		\frac{
\begin{aligned}
	\rho_c\ \textnormal{erfc} \left( B_\epsilon \right) \left( e^{- \frac{u_a + u_b}{\epsilon}} - e^{- \frac{u_a}{\epsilon}} \right) 
	\\
	+ \rho_c\ \textnormal{erfc} \left( C_\epsilon \right) e^{- \frac{u_a}{\epsilon}} + \rho_d\ \textnormal{erfc} \left( D_\epsilon \right) e^{- \frac{u_a + u_b}{\epsilon}}
\end{aligned} 
}{ 
\begin{aligned}
\sqrt{\pi}\ e^{- \frac{u_a}{\epsilon}} + {\textnormal{erfc}} \left( {A_\epsilon} \right) \left( 1 - e^{- \frac{u_a}{\epsilon}} \right) + {\textnormal{erfc}} \left( {B_\epsilon} \right) \left( e^{- \frac{u_a + u_b}{\epsilon}} - e^{- \frac{u_a}{\epsilon}} \right)
\end{aligned} }.
\end{aligned}
\]

\subsection*{(3) $c < x < b$}

\[
\begin{aligned}
	u^\epsilon &= 
		\frac{\epsilon}{\sqrt{2t 	\epsilon}} \cdot \frac{\begin{aligned}
e^{-{A_\epsilon}^2} \left( 1 - e^{- \frac{u_a}{\epsilon}} \right) + e^{-{B_\epsilon}^2} \left( e^{- \frac{u_a}{\epsilon}} - e^{- \frac{u_a + u_b}{\epsilon}} \right)
\end{aligned}}{
\begin{aligned}
\sqrt{\pi}\ e^{- \frac{u_a}{\epsilon}} + {\textnormal{erfc}} \left( {A_\epsilon} \right) \left( 1 - e^{- \frac{u_a}{\epsilon}} \right) + {\textnormal{erfc}} \left( {B_\epsilon} \right) \left( e^{- \frac{u_a + u_b}{\epsilon}} - e^{- \frac{u_a}{\epsilon}} \right)
\end{aligned} },
\\
\\
	R^\epsilon &= 
		\frac{
\begin{aligned}
	\sqrt{\pi}\ \rho_c\ e^{- \frac{u_a}{\epsilon}} + \rho_c\ \textnormal{erfc} \left( B_\epsilon \right) \left( e^{- \frac{u_a + u_b}{\epsilon}} - e^{- \frac{u_a}{\epsilon}} \right) 
	\\
	- \rho_c\ \textnormal{erfc} \left( C_\epsilon \right) e^{- \frac{u_a}{\epsilon}} + \rho_d\ \textnormal{erfc} \left( D_\epsilon \right) e^{- \frac{u_a + u_b}{\epsilon}}
\end{aligned} 
}{ 
\begin{aligned}
\sqrt{\pi}\ e^{- \frac{u_a}{\epsilon}} + {\textnormal{erfc}} \left( {A_\epsilon} \right) \left( 1 - e^{- \frac{u_a}{\epsilon}} \right) + {\textnormal{erfc}} \left( {B_\epsilon} \right) \left( e^{- \frac{u_a + u_b}{\epsilon}} - e^{- \frac{u_a}{\epsilon}} \right)
\end{aligned} }.
\end{aligned}
\]

\subsection*{(4) $b < x < d$}

\[
\begin{aligned}
	u^\epsilon &= 
		\frac{\epsilon}{\sqrt{2t \epsilon}} \cdot \frac{\begin{aligned}
e^{-{A_\epsilon}^2} \left( 1 - e^{- \frac{u_a}{\epsilon}} \right) + e^{-{B_\epsilon}^2} \left( e^{- \frac{u_a}{\epsilon}} - e^{- \frac{u_a + u_b}{\epsilon}} \right)
\end{aligned}}{
\begin{aligned}
\sqrt{\pi}\ e^{- \frac{u_a + u_b}{\epsilon}} + {\textnormal{erfc}} \left( {A_\epsilon} \right) \left( 1 - e^{- \frac{u_a}{\epsilon}} \right) + {\textnormal{erfc}} \left( {B_\epsilon} \right) \left( e^{- \frac{u_a}{\epsilon}} - e^{- \frac{u_a + u_b}{\epsilon}} \right) 
\end{aligned} },
\\
\\
	R^\epsilon &= 
		\frac{
\begin{aligned}
	\sqrt{\pi}\ \rho_c\ e^{- \frac{u_a + u_b}{\epsilon}} + \rho_c\ \textnormal{erfc} \left( B_\epsilon \right) \left( e^{- \frac{u_a}{\epsilon}} - e^{- \frac{u_a + u_b}{\epsilon}} \right) 
\\
	- \rho_c\ \textnormal{erfc} \left( C_\epsilon \right) e^{- \frac{u_a}{\epsilon}} + \rho_d\ \textnormal{erfc} \left( D_\epsilon \right) e^{- \frac{u_a + u_b}{\epsilon}} 	
\end{aligned} 
}{ 
\begin{aligned}
	\sqrt{\pi}\ e^{- \frac{u_a + u_b}{\epsilon}} + {\textnormal{erfc}} \left( {A_\epsilon} \right) \left( 1 - e^{- \frac{u_a}{\epsilon}} \right) + {\textnormal{erfc}} \left( {B_\epsilon} \right) \left( e^{- \frac{u_a}{\epsilon}} - e^{- \frac{u_a + u_b}{\epsilon}} \right) 
\end{aligned} }.
\end{aligned}
\]

\subsection*{(5) $x > d$}

\[
\begin{aligned}
	u^\epsilon &= 
		\frac{\epsilon}{\sqrt{2t \epsilon}} \cdot \frac{\begin{aligned}
e^{-{A_\epsilon}^2} \left( 1 - e^{- \frac{u_a}{\epsilon}} \right) + e^{-{B_\epsilon}^2} \left( e^{- \frac{u_a}{\epsilon}} - e^{- \frac{u_a + u_b}{\epsilon}} \right)
\end{aligned}}{
\begin{aligned}
\sqrt{\pi}\ e^{- \frac{u_a + u_b}{\epsilon}} + {\textnormal{erfc}} \left( {A_\epsilon} \right) \left( 1 - e^{- \frac{u_a}{\epsilon}} \right) + {\textnormal{erfc}}({B_\epsilon}) \left( e^{- \frac{u_a}{\epsilon}} - e^{- \frac{u_a + u_b}{\epsilon}} \right) 
\end{aligned} },
\\
\\
	R^\epsilon &= 
		\frac{
\begin{aligned}
	\sqrt{\pi}\ ( \rho_c + \rho_d )\ e^{- \frac{u_a + u_b}{\epsilon}} + \rho_c\ \textnormal{erfc} \left( B_\epsilon \right) \left( e^{- \frac{u_a}{\epsilon}} - e^{- \frac{u_a + u_b}{\epsilon}} \right) 
\\
- \rho_c\ \textnormal{erfc} \left( C_\epsilon \right) e^{- \frac{u_a}{\epsilon}} -\rho_d\ \textnormal{erfc} \left( D_\epsilon \right)\ e^{- \frac{u_a + u_b}{\epsilon}}
\end{aligned} 
}{ 
\begin{aligned}
	\sqrt{\pi}\ e^{- \frac{u_a + u_b}{\epsilon}} + {\textnormal{erfc}} \left( {A_\epsilon} \right) \left( 1 - e^{- \frac{u_a}{\epsilon}} \right) + {\textnormal{erfc}}({B_\epsilon}) \left( e^{- \frac{u_a}{\epsilon}} - e^{- \frac{u_a + u_b}{\epsilon}} \right)
\end{aligned} }.
\end{aligned}
\]
\end{footnotesize}
Let us consider various cases depending on the relative signs of $u_a$ and $u_b$.

\textbf{Case 1.} $u_a < 0$, $u_b > 0$

The limit $\lim_{\epsilon \rightarrow 0} \left( u^\epsilon , R^\epsilon \right)$ is evaluated explicitly in different regions as follows:

\begin{footnotesize}
\begin{itemize}

	\item $x < a$

\[
\begin{aligned}
\lim_{\epsilon \rightarrow 0} 
&\begin{pmatrix}
	\displaystyle{ \frac{a-x}{2t} } \cdot \frac{\begin{aligned}
\frac{e^{- \frac{| u_a |}{\epsilon}} - 1}{{A_\epsilon}\ e^{{A_\epsilon}^2 + \frac{u_a}{\epsilon}}} + \frac{b-x}{a-x} \cdot \frac{1 - e^{- \frac{| u_b |}{\epsilon}}}{{B_\epsilon}\ e^{{B_\epsilon}^2 + \frac{u_a}{\epsilon}}} 
\end{aligned}}{
\begin{aligned}
\sqrt{\pi} + {\textnormal{erfc}} \left( {A_\epsilon} \right)\ e^{\frac{| u_a |}{\epsilon}} \left( 1 - e^{- \frac{| u_a |}{\epsilon}} \right) \\
+ {\textnormal{erfc}} \left( {B_\epsilon} \right)\ e^{\frac{| u_a |}{\epsilon}} \left( e^{- \frac{| u_b |}{\epsilon}} - 1 \right)
\end{aligned} }
		\\		
		\\				
	\frac{
\begin{aligned}
\rho_c\ \textnormal{erfc} \left( B_\epsilon \right)\ e^{\frac{| u_a |}{\epsilon}}\ \left( e^{- \frac{| u_b |}{\epsilon}} - 1 \right) \\
+ \rho_c\ \textnormal{erfc} \left( C_\epsilon \right)\ e^{\frac{| u_a |}{\epsilon}} + \rho_d\ \textnormal{erfc} \left( D_\epsilon \right)\ e^{\frac{| u_a |}{\epsilon}} \cdot e^{- \frac{| u_b |}{\epsilon}}
\end{aligned} 
}{ 
\begin{aligned}
\sqrt{\pi} + \textnormal{erfc} \left( A_\epsilon \right)\ e^{\frac{| u_a |}{\epsilon}}\ \left( 1 - e^{- \frac{| u_a |}{\epsilon}} \right) 
\\
+ \textnormal{erfc} \left( B_\epsilon \right)\ e^{\frac{| u_a |}{\epsilon}}\ \left( e^{- \frac{| u_b |}{\epsilon}} - 1 \right)
\end{aligned} }
\end{pmatrix}^T
\\
	= &\begin{cases}
\left( 0 , 0 \right),\ &{x < a - \sqrt{- 2 u_a t}},
\\
\left( \frac{x-a}{t} , 0 \right),\ &{x > a - \sqrt{- 2 u_a t}}.
\end{cases}
\end{aligned}
\]
The underlying computations for the justification of the above limits (and hence the rest) can be found in the appendix.

	\item $ a < x < c $

\[
\begin{aligned}
\lim_{\epsilon \rightarrow 0} 
&\begin{pmatrix}
	\frac{\epsilon}{\sqrt{2t \epsilon}} \cdot \frac{\begin{aligned}
e^{-{A_\epsilon}^2} \left( e^{-\frac{| u_a |}{\epsilon}} - 1 \right) + e^{-{B_\epsilon}^2} \left( 1 - e^{-\frac{| u_b |}{\epsilon}} \right)
\end{aligned}}{
\begin{aligned}
\sqrt{\pi} + {\textnormal{erfc}} \left( {A_\epsilon} \right) \left( e^{-\frac{| u_a |}{\epsilon}} - 1 \right) + {\textnormal{erfc}} \left( {B_\epsilon} \right) \left( e^{-\frac{| u_b |}{\epsilon}} - 1 \right)
\end{aligned} }
\\
\\
	\frac{
\begin{aligned}
\rho_c\ \textnormal{erfc} \left( B_\epsilon \right) \left( e^{- \frac{| u_b |}{\epsilon}} - 1 \right) 
\\
+ \rho_c\ \textnormal{erfc} \left( C_\epsilon \right) + \rho_d\ \textnormal{erfc} \left( D_\epsilon \right) e^{- \frac{| u_b |}{\epsilon}}
\end{aligned} 
}{ 
\begin{aligned}
\sqrt{\pi} + \textnormal{erfc} \left( A_\epsilon \right) \left( e^{- \frac{| u_a |}{\epsilon}} - 1 \right) + \textnormal{erfc} \left( B_\epsilon \right) \left( e^{- \frac{| u_b |}{\epsilon}} - 1 \right)
\end{aligned} }
\end{pmatrix} 
\\
	= &\left( 0 , 0 \right).	
\end{aligned}
\]

	\item $c < x < b$
	
\[
\begin{aligned}
\lim_{\epsilon \rightarrow 0} 
&\begin{pmatrix}
	\displaystyle{ \frac{\epsilon}{\sqrt{2t \epsilon}} } \cdot \frac{\begin{aligned}
e^{-{A_\epsilon}^2} \left( e^{-\frac{| u_a |}{\epsilon}} - 1 \right) + e^{-{B_\epsilon}^2} \left( 1 - e^{-\frac{| u_b |}{\epsilon}} \right)
\end{aligned}}{
\begin{aligned}
\sqrt{\pi} + {\textnormal{erfc}}({A_\epsilon}) \left( e^{-\frac{| u_a |}{\epsilon}} - 1 \right) + {\textnormal{erfc}}({B_\epsilon}) \left( e^{-\frac{| u_b |}{\epsilon}} - 1 \right)
\end{aligned} }
\\
\\
	\frac{
\begin{aligned}
\sqrt{\pi}\ \rho_c + \rho_c\ \textnormal{erfc} \left( B_\epsilon \right) \left( e^{- \frac{| u_b |}{\epsilon}} - 1 \right) 
\\
- \rho_c\ \textnormal{erfc} \left( C_\epsilon \right) + \rho_d\ \textnormal{erfc} \left( D_\epsilon \right) e^{- \frac{| u_b |}{\epsilon}}
\end{aligned} 
}{ 
\begin{aligned}
\sqrt{\pi} + \textnormal{erfc} \left( A_\epsilon \right) \left( e^{- \frac{| u_a |}{\epsilon}} - 1 \right) + \textnormal{erfc} \left( B_\epsilon \right) \left( e^{- \frac{| u_b |}{\epsilon}} - 1 \right)
\end{aligned} }
\end{pmatrix}^T
\\
		&= ( 0 , \rho_c ).
\end{aligned}
\]

	\item $b < x < d$

\[
\begin{aligned}
\lim_{\epsilon \rightarrow 0} 
&\begin{pmatrix}
	\displaystyle{ \frac{x-a}{2t} } \cdot
\frac{\begin{aligned}
\frac{e^{- \frac{| u_a |}{\epsilon}} - 1}{A_\epsilon\ e^{A_\epsilon^2 - \frac{u_b}{\epsilon}}} + \frac{x-b}{x-a} \cdot \frac{1 - e^{- \frac{| u_b |}{\epsilon}}}{B_\epsilon\ e^{B_\epsilon^2 - \frac{u_b}{\epsilon}}}
\end{aligned} }{\begin{aligned}
\sqrt{\pi} + \textnormal{erfc} \left( A_\epsilon \right) e^{\frac{| u_b |}{\epsilon}} \left( e^{- \frac{| u_b |}{\epsilon}} - 1 \right) + \textnormal{erfc} \left( B_\epsilon \right) e^{\frac{| u_b |}{\epsilon}} \left( 1 - e^{- \frac{| u_b |}{\epsilon}} \right)
\end{aligned} }
		 \\
		 \\
	\frac{
\begin{aligned}
\sqrt{\pi}\ \rho_c + \rho_c\ \textnormal{erfc} \left( B_\epsilon \right) e^{\frac{| u_b |}{\epsilon}} \left( 1 - e^{- \frac{| u_b |}{\epsilon}} \right) 
\\
- \rho_c\ \textnormal{erfc} \left( C_\epsilon \right) e^{\frac{| u_b |}{\epsilon}} + \rho_d\ \textnormal{erfc} \left( D_\epsilon \right)
\end{aligned} 
}{ 
\begin{aligned}
\sqrt{\pi} + \textnormal{erfc} \left( A_\epsilon \right) e^{\frac{| u_b |}{\epsilon}} \left( e^{- \frac{| u_a |}{\epsilon}} - 1 \right) + \textnormal{erfc} \left( B_\epsilon \right) e^{\frac{| u_b |}{\epsilon}} \left( 1 - e^{- \frac{| u_b |}{\epsilon}} \right)
\end{aligned} }		
\end{pmatrix}^T
\\
	= &\begin{cases}
\left( \frac{x-b}{t} , \rho_c \right),\ &x < b + \sqrt{2 u_b t},
\\
\left( 0 , \rho_c \right),\ &x > b + \sqrt{2 u_b t}.
\end{cases}
\end{aligned}
\]

	\item $x > d$

\[
\begin{aligned}
\lim_{\epsilon \rightarrow 0} 
&\begin{pmatrix}
	\displaystyle{ \frac{x-a}{2t} } \cdot
\frac{\begin{aligned}
\frac{e^{- \frac{| u_a |}{\epsilon}} - 1}{A_\epsilon\ e^{A_\epsilon^2 - \frac{u_b}{\epsilon}}} + \frac{x-b}{x-a} \cdot \frac{1 - e^{- \frac{| u_b |}{\epsilon}}}{B_\epsilon\ e^{B_\epsilon^2 - \frac{u_b}{\epsilon}}}
\end{aligned} }{\begin{aligned}
\sqrt{\pi} + \textnormal{erfc} \left( A_\epsilon \right) e^{\frac{| u_b |}{\epsilon}} \left( e^{- \frac{| u_b |}{\epsilon}} - 1 \right) + \textnormal{erfc} \left( B_\epsilon \right) e^{\frac{| u_b |}{\epsilon}} \left( 1 - e^{- \frac{| u_b |}{\epsilon}} \right)
\end{aligned} }
		 \\
		 \\
	\frac{
\begin{aligned}
\sqrt{\pi}\ \left( \rho_c + \rho_d \right) + \rho_c\ \textnormal{erfc} \left( B_\epsilon \right) e^{\frac{| u_b |}{\epsilon}} \left( 1 - e^{- \frac{| u_b |}{\epsilon}} \right) 
\\
- \rho_c\ \textnormal{erfc} \left( C_\epsilon \right) e^{\frac{| u_b |}{\epsilon}} - \rho_d\ \textnormal{erfc} \left( D_\epsilon \right)
\end{aligned} 
}{ 
\begin{aligned}
\sqrt{\pi} + \textnormal{erfc} \left( A_\epsilon \right) e^{\frac{| u_b |}{\epsilon}} \left( e^{- \frac{| u_a |}{\epsilon}} - 1 \right) + \textnormal{erfc} \left( B_\epsilon \right) e^{\frac{| u_b |}{\epsilon}} \left( 1 - e^{- \frac{| u_b |}{\epsilon}} \right)
\end{aligned} }		
\end{pmatrix}^T	 
\\
	= &\begin{cases}
		\left( \frac{x-b}{t} , \rho_c \right),\ &x < b + \sqrt{2 u_b t},
		\\
		\left( 0 , \rho_c + \rho_d \right),\ &x > b + \sqrt{2 u_b t}. 
		\end{cases}
\end{aligned}
\]

\end{itemize}
\end{footnotesize}
To describe our solution, let us introduce the following curves:
\[
\begin{aligned}
\gamma_{{}_a} (t) 
	&:= a - \sqrt{- 2 u_a t},\ \gamma_{{}_b} (t) := b + \sqrt{2 u_b t},\ \gamma_{{}_{c}} (t) := c,\ t \geq 0,
	\\
\gamma_{{}_d} (t) 
	&:= \begin{cases}
d,\ &0 \leq t \leq \frac{(d-b)^2}{2 u_b},
	\\
b + \sqrt{2 u_b t},\ &t \geq \frac{(d-b)^2}{2 u_b}. 			
\end{cases}	    
\end{aligned}
\]
We can now describe our velocity component $u$ as follows:

\[
\begin{aligned}
u(x,t) = \begin{cases}
0,\ &x \in ( -\infty , \gamma_{{}_a} (t) ) \cup (a,b) \cup ( \gamma_{{}_b} (t) , \infty ),
\\
\frac{x-a}{t},\ &x \in ( \gamma_{{}_a} (t) , a ),
\\
\frac{x-b}{t},\ &x \in ( b , \gamma_{{}_b} (t) ).
\end{cases}
\end{aligned}
\]
For obtaining the density component $\rho$, we fix any test function $\phi \in C_c^\infty \left( (-\infty,\infty) \times [0,\infty) \right)$ and compute

\[
\begin{aligned}
	\langle R_x , \phi \rangle 
		&= - \langle R , \phi_x \rangle
	\\
		&= - \int_0^\infty \int_{c}^{\gamma_{{}_d} (t)} \rho_c\ \phi_x\ dx\ dt - \int_0^\infty \int_{\gamma_{{}_d} (t)}^{\infty} ( \rho_c + \rho_d )\ \phi_x\ dx\ dt
	\\
		&= \int_0^\infty \rho_c \left[ \phi \left( c , t \right) - \phi \left( \gamma_{{}_d} (t) , t \right) \right] dt + \int_0^\infty ( \rho_c + \rho_d )\ \phi \left( \gamma_{{}_d} (t) , t \right) dt
	\\
		&= \langle \rho_c \left( \delta_{x = c} - \delta_{x = \gamma_{{}_d} (t)} \right) + ( \rho_c + \rho_d )\ \delta_{x = \gamma_{{}_d} (t)} , \phi \rangle
	\\
		&= \langle \rho_c\ \delta_{x = c} + \rho_d\ \delta_{x = \gamma_{{}_d} (t)} , \phi \rangle,
\end{aligned}
\]
where $R_x$ denotes the distributional derivative of $R = {\displaystyle{\lim_{\epsilon \rightarrow 0}}} R^\epsilon$ w.r.t. $x$. Therefore the density component $\rho$ is given by
\[
\rho = \rho_c\ \delta_{x = \gamma_{{}_{c}} (t)} + \rho_d\ \delta_{x = \gamma_{{}_d} (t)},
\]
where $\gamma_{{}_{c}} : s \longmapsto c$ for every $s \geq 0$. Finally, the graphical representation of our solution is given by
\begin{center}
\includegraphics[scale=0.4]{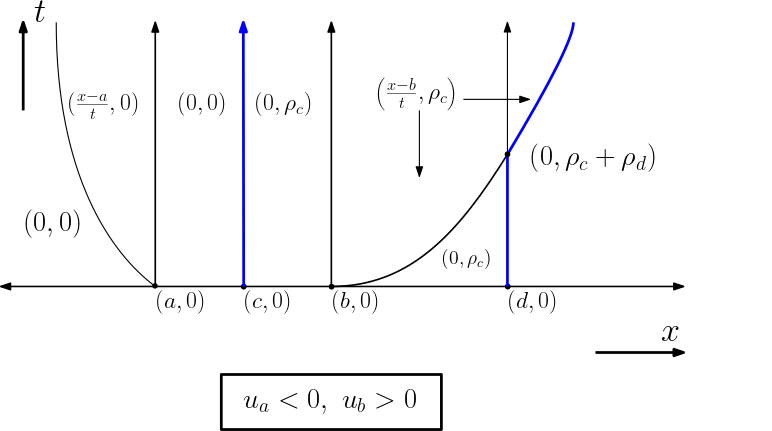}
\end{center}

\textbf{Case 2.} $u_a > 0$, $u_b > 0$

The limit ${\displaystyle{\lim_{\epsilon \rightarrow 0}}} \left( u^\epsilon , R^\epsilon \right)$ is evaluated explicitly in different regions as follows:

\begin{itemize}

	\item $x < a$

\[
\begin{aligned}
\lim_{\epsilon \rightarrow 0} 
&\begin{pmatrix}
	\displaystyle{ \frac{\epsilon}{\sqrt{2 t \epsilon}} } \cdot \frac{\begin{aligned}
e^{- A_\epsilon^2} \left( 1 - e^{- \frac{| u_a |}{\epsilon}} \right) + e^{- B_\epsilon^2} \left( e^{- \frac{| u_a |}{\epsilon}} - e^{- \frac{| u_a + u_b |}{\epsilon}} \right)
\end{aligned}}{\begin{aligned}
\sqrt{\pi} + \textnormal{erfc} \left( A_\epsilon \right) \left( e^{- \frac{| u_a |}{\epsilon}} - 1 \right) + \textnormal{erfc} \left( B_\epsilon \right) \left( e^{- \frac{| u_a + u_b |}{\epsilon}} - e^{- \frac{| u_a |}{\epsilon}} \right)
\end{aligned} }
\\
\\
	\frac{
\begin{aligned}
\rho_c\ \textnormal{erfc} \left( B_\epsilon \right) e^{- \frac{| u_a |}{\epsilon}}\ \left( e^{- \frac{| u_b |}{\epsilon}} - 1 \right) \\
+ \rho_c\ \textnormal{erfc} \left( C_\epsilon \right) e^{- \frac{| u_a |}{\epsilon}} + \rho_d\ \textnormal{erfc} \left( D_\epsilon \right) e^{- \frac{| u_a + u_b |}{\epsilon}}
\end{aligned} 
}{ 
\begin{aligned}
\sqrt{\pi} + \textnormal{erfc} \left( A_\epsilon \right)\ \left( e^{- \frac{| u_a |}{\epsilon}} - 1 \right) + \textnormal{erfc} \left( B_\epsilon \right) e^{- \frac{| u_a |}{\epsilon}} \left( e^{- \frac{| u_b |}{\epsilon}} - 1 \right)
\end{aligned} }								
\end{pmatrix}^T
\\
	= &\left( 0 , 0 \right).
\end{aligned}
\]

	\item $a < x < c$

\[
\begin{aligned}
\lim_{\epsilon \rightarrow 0} 
&\begin{pmatrix}
	\displaystyle{ \frac{x-a}{2t} } \cdot \frac{\begin{aligned}
	\frac{1 - e^{- \frac{| u_a |}{\epsilon}}}{A_\epsilon\ e^{A_\epsilon^2 - \frac{u_a}{\epsilon}}} + \frac{b-x}{x-a} \cdot \frac{1 - e^{- \frac{| u_b |}{\epsilon}}}{B_\epsilon\ e^{B_\epsilon^2}}	
\end{aligned}}{\begin{aligned}
		\sqrt{\pi} + \textnormal{erfc} \left( A_\epsilon \right) e^{\frac{| u_a |}{\epsilon}} \left( 1 - e^{- \frac{| u_a |}{\epsilon}} \right) + \textnormal{erfc} \left( B_\epsilon \right) \left( e^{- \frac{| u_b |}{\epsilon}} - 1 \right)
\end{aligned} }
		\\
		\\
	\frac{
\begin{aligned}
\rho_c\ \textnormal{erfc} \left( B_\epsilon \right)\ \left( e^{- \frac{| u_b |}{\epsilon}} - 1 \right) 
\\
+ \rho_c\ \textnormal{erfc} \left( C_\epsilon \right) + \rho_d\ \textnormal{erfc} \left( D_\epsilon \right) e^{- \frac{| u_b |}{\epsilon}}
\end{aligned} 
}{ 
\begin{aligned}
\sqrt{\pi} + \textnormal{erfc} \left( A_\epsilon \right) e^{\frac{| u_a |}{\epsilon}} \left( 1 - e^{- \frac{| u_a |}{\epsilon}} \right) + \textnormal{erfc} \left( B_\epsilon \right) \left( e^{- \frac{| u_b |}{\epsilon}} - 1 \right)
\end{aligned} }
\end{pmatrix}^T
\\
	= &\begin{cases}
		\left( \frac{x-a}{t} , 0 \right),\ &x < a + \sqrt{2 u_a t},
		\\
		\left( 0 , 0 \right),\ &x > a + \sqrt{2 u_a t}.
	\end{cases}
\end{aligned}
\]

	\item $c < x < b$

\[
\begin{aligned}
\lim_{\epsilon \rightarrow 0} &
\begin{pmatrix}
	\displaystyle{ \frac{x-a}{2t} } \cdot \frac{\begin{aligned}
\frac{1 - e^{- \frac{| u_a |}{\epsilon}}}{A_\epsilon\ e^{A_\epsilon^2 - \frac{u_a}{\epsilon}}} + \frac{b-x}{x-a} \cdot \frac{1 - e^{- \frac{| u_b |}{\epsilon}}}{B_\epsilon\ e^{B_\epsilon^2}}	
\end{aligned}}{\begin{aligned}
\sqrt{\pi} + \textnormal{erfc} \left( A_\epsilon \right) e^{\frac{| u_a |}{\epsilon}} \left( 1 - e^{- \frac{| u_a |}{\epsilon}} \right) + \textnormal{erfc} \left( B_\epsilon \right) \left( e^{- \frac{| u_b |}{\epsilon}} - 1 \right)
\end{aligned} }
		\\
		\\
	\frac{
\begin{aligned}
\sqrt{\pi}\ \rho_c + \rho_c\ \textnormal{erfc} \left( B_\epsilon \right) \left( e^{- \frac{| u_b |}{\epsilon}} - 1 \right) 
\\
- \rho_c\ \textnormal{erfc} \left( C_\epsilon \right) + \rho_d\ \textnormal{erfc} \left( D_\epsilon \right) e^{- \frac{| u_b |}{\epsilon}}
\end{aligned} 
}{ 
\begin{aligned}
\sqrt{\pi} + \textnormal{erfc} \left( A_\epsilon \right) e^{\frac{| u_a |}{\epsilon}} \left( 1 - e^{- \frac{| u_a |}{\epsilon}} \right) + \textnormal{erfc} \left( B_\epsilon \right) \left( e^{- \frac{| u_b |}{\epsilon}} - 1 \right)
\end{aligned} }		
\end{pmatrix}^T
\\
	= &\begin{cases}
		\left( \frac{x-a}{t} , 0 \right),\ &x < a + \sqrt{2 u_a t},
		\\
		\left( 0 , \rho_c \right),\ &x > a + \sqrt{2 u_a t}.
	\end{cases}
\end{aligned}
\]
For the remaining regions, let us consider the curves

\begin{itemize}

	\item $p_a (s) := a + \sqrt{2 \left( u_a + u_b \right) s}$,

	\item $p_b (s) := b + \sqrt{2 u_b s}$,

	\item $l(s) := \frac{a+b}{2} + \frac{u_a}{b-a} s$

\end{itemize}
defined for every $s \geq 0$. Then the structure of our limits can be explicitly described as follows:

\item $b < x < d$

\[
\begin{aligned}
	\lim_{\epsilon \rightarrow 0} 
&\begin{pmatrix}
	\displaystyle{ \frac{x-a}{2t} } \cdot \frac{\begin{aligned}
\frac{1 - e^{- \frac{| u_a |}{\epsilon}}}{A_\epsilon\ e^{A_\epsilon^2 - \frac{u_a + u_b}{\epsilon}}} + \frac{x-b}{x-a} \cdot \frac{1 - e^{- \frac{| u_b |}{\epsilon}}}{B_\epsilon\ e^{B_\epsilon^2 - \frac{u_b}{\epsilon}}}
\end{aligned}}{\begin{aligned}
\sqrt{\pi} + \textnormal{erfc} \left( A_\epsilon \right) e^{\frac{| u_a + u_b |}{\epsilon}} \left( 1 - e^{- \frac{| u_a |}{\epsilon}} \right) + \textnormal{erfc} \left( B_\epsilon \right) \left( 1 - e^{- \frac{| u_b |}{\epsilon}} \right)
\end{aligned} }		
		\\
		\\
	\frac{
\begin{aligned}
\sqrt{\pi}\ \rho_c + \rho_c\ \textnormal{erfc} \left( B_\epsilon \right) e^{\frac{| u_b |}{\epsilon}} \left( 1 - e^{- \frac{| u_b |}{\epsilon}} \right) 
\\
- \rho_c\ \textnormal{erfc} \left( C_\epsilon \right) e^{\frac{| u_b |}{\epsilon}} + \rho_d\ \textnormal{erfc} \left( D_\epsilon \right)
\end{aligned} 
}{ 
\begin{aligned}
\sqrt{\pi} + \textnormal{erfc} \left( A_\epsilon \right) e^{\frac{| u_a + u_b |}{\epsilon}} \left( 1 - e^{- \frac{| u_a |}{\epsilon}} \right) + \textnormal{erfc} \left( B_\epsilon \right) e^{\frac{| u_b |}{\epsilon}} \left( 1 - e^{- \frac{| u_b |}{\epsilon}} \right)
\end{aligned} }
		\end{pmatrix}^T
\\
	= &\begin{cases}
		\left( 0 , \rho_c \right),\ &{ x > \max{ \big\{ p_a (t) , p_b (t) \big\} } },
		\\
		\left( \frac{x-b}{t} , \rho_c \right),\ &{ x \in \Big( p_a (t) , p_b (t) \Big) \cup \Big( l(t) , \min{ \big\{ p_a (t) , p_b (t) \big\} } \Big) },
		\\
		\left( \frac{x-a}{t} , 0 \right),\ &{ x \in \Big( p_b (t) , p_a (t) \Big) \cup \Big( -\infty , \min{ \big\{ p_a (t) , p_b (t) , l(t) \big\} } \Big) }.
	\end{cases}
\end{aligned}
\]

	\item $x > d$

\[
\begin{aligned}
\lim_{\epsilon \rightarrow 0} &
	\begin{pmatrix}
\displaystyle{ \frac{x-a}{2t} } \cdot \frac{\begin{aligned}
\frac{1 - e^{- \frac{| u_a |}{\epsilon}}}{A_\epsilon\ e^{A_\epsilon^2 - \frac{u_a + u_b}{\epsilon}}} + \frac{x-b}{x-a} \cdot \frac{1 - e^{- \frac{| u_b |}{\epsilon}}}{B_\epsilon\ e^{B_\epsilon^2 - \frac{u_b}{\epsilon}}}
\end{aligned}}{\begin{aligned}
\sqrt{\pi} + \textnormal{erfc} \left( A_\epsilon \right) e^{\frac{| u_a + u_b |}{\epsilon}} \left( 1 - e^{- \frac{| u_a |}{\epsilon}} \right) + \textnormal{erfc} \left( B_\epsilon \right) \left( 1 - e^{- \frac{| u_b |}{\epsilon}} \right)
\end{aligned} }
\\
\\
	\frac{
\begin{aligned}
\sqrt{\pi}\ \left( \rho_c + \rho_d \right)+ \rho_c\ \textnormal{erfc} \left( B_\epsilon \right) e^{\frac{| u_b |}{\epsilon}} \left( 1 - e^{- \frac{| u_b |}{\epsilon}} \right) 
\\
- \rho_c\ \textnormal{erfc} \left( C_\epsilon \right) e^{\frac{| u_b |}{\epsilon}} - \rho_d\ \textnormal{erfc} \left( D_\epsilon \right)
\end{aligned} 
}{ 
\begin{aligned}
\sqrt{\pi} + \textnormal{erfc} \left( A_\epsilon \right) e^{\frac{| u_a + u_b |}{\epsilon}} \left( 1 - e^{- \frac{| u_a |}{\epsilon}} \right) + \textnormal{erfc} \left( B_\epsilon \right) e^{\frac{| u_b |}{\epsilon}} \left( 1 - e^{- \frac{| u_b |}{\epsilon}} \right)
\end{aligned} }
\end{pmatrix}^T
\\
	= &\begin{cases}
		\left( 0 , \rho_c + \rho_d \right),\ &{ x > \max{ \big\{ p_a (t) , p_b (t) \big\} } },
		\\
		\left( \frac{x-b}{t} , \rho_c \right),\ &{ x \in \Big( p_a (t) , p_b (t) \Big) \cup \Big( l(t) , \min{ \big\{ p_a (t) , p_b (t) \big\} } \Big) },
		\\
		\left( \frac{x-a}{t} , 0 \right),\ &{ x \in \Big( p_b (t) , p_a (t) \Big) \cup \Big( -\infty , \min{ \big\{ p_a (t) , p_b (t) , l(t) \big\} } \Big) }.   
	\end{cases}
\end{aligned}
\]

\end{itemize}
To describe our solution, we introduce the curves

\[
\begin{aligned}
\gamma_{{}_{a}} (t) &:= \begin{cases}
a + \sqrt{2 u_a t},\ &0 \leq t \leq t_{{}_{a,1}} := \frac{(b-a)^2}{2 u_a},
\\
\frac{a+b}{2} + \frac{u_a}{b-a} t,\ &t_{{}_{a,1}} \leq t \leq t_{{}_{a,2}} := \frac{(d-a)^2}{2 \left( \sqrt{u_a + u_b} - \sqrt{u_b} \right)^2},
\\
a + \sqrt{2 ( u_a + u_b ) t},\ &t \geq t_{{}_{a,2}},
\end{cases}
\\
\\
\gamma_{{}_{b,1}} (t) &:= \begin{cases}
b,\ &0 \leq t \leq t_{{}_{b,1}} := \frac{(b-a)^2}{2 u_a},
\\
\frac{a+b}{2} + \frac{u_a}{b-a} t,\ &t_{{}_{b,1}} \leq t \leq t_{{}_{b,2}} := \frac{(b-a)^2}{2 \left( \sqrt{u_a + u_b} - \sqrt{u_b} \right)^2},
\\
a + \sqrt{2 ( u_a + u_b ) t},\ &t \geq t_{{}_{b,2}},
\end{cases}
\\
\\
\gamma_{{}_{b,2}} (t) &:= \begin{cases}
b + \sqrt{2 u_b t},\ &0 \leq t \leq t_{{}_{b,2}},
\\
a + \sqrt{2 ( u_a + u_b ) t},\ &t \geq t_{{}_{b,2}},
\end{cases}
\\
\\
\gamma_{{}_{c}} (t) &:= \begin{cases}
c,\ &0 \leq t \leq t_{{}_{c,1}} := \frac{(c-a)^2}{2 u_a},
\\
a + \sqrt{2 u_a t},\ &t_{{}_{c,1}} \leq t \leq t_{{}_{c,2}} := \frac{(b-a)^2}{2 u_a},
\\
\frac{a+b}{2} + \frac{u_a}{b-a} t,\ &t_{{}_{c,2}} \leq t \leq t_{{}_{c,3}} := \frac{(d-a)^2}{2 \left( \sqrt{u_a + u_b} - \sqrt{u_b} \right)^2},
\\
a + \sqrt{2 ( u_a + u_b ) t},\ &t \geq t_{{}_{c,3}}.
\end{cases}
\end{aligned}
\]
The velocity component $u$ can now be described as follows:

\[
\begin{aligned}
u(x,t) = \begin{cases}
0,\ &x \in \Big( -\infty , a \Big) \cup \Big( \gamma_{{}_{a}} (t) , \gamma_{{}_{b,1}} (t) \Big) \cup \Big( \gamma_{{}_{b,2}} (t) , \infty \Big),
\\
\frac{x-a}{t},\ &x \in \Big( a , \gamma_{{}_{a}} (t) \Big),
\\
\frac{x-b}{t},\ &x \in \Big( \gamma_{{}_{b,1}} (t) , \gamma_{{}_{b,2}} (t) \Big).
\end{cases}
\end{aligned} 
\]
For getting the density component $\rho$, we have to consider three subcases. First let us consider

\[
\left( x^* , t^* \right) := \left( b + \frac{\left( b-a \right) \sqrt{u_b}}{\sqrt{u_a + u_b} - \sqrt{u_b}} , \frac{(b-a)^2}{2 \left( \sqrt{u_a + u_b} - \sqrt{u_b} \right)^2} \right).
\]
The first subcase is $d < x^*$. Define the curve

\[
\begin{aligned}
\gamma_{{}_{d}} (t) := \begin{cases}
d,\ &0 \leq t \leq t_{{}_{d,1}} := \frac{(d-b)^2}{2 u_b},
\\
b + \sqrt{2 u_b t},\ &t_{{}_{d,1}} \leq t \leq t_{{}_{d,2}} := t^{*},
\\
a + \sqrt{2 ( u_a + u_b ) t},\ &t \geq t_{{}_{d,2}}.
\end{cases}
\end{aligned} 
\]
We observe that any test function $\phi \in C_c^\infty \left( (-\infty,\infty) \times [0,\infty) \right)$ will satisfy

\[
\begin{aligned}
	\langle R_x , \phi \rangle 
		&= - \langle R , \phi_x \rangle
	\\
		&= - \int_0^\infty \int_{\gamma_{{}_{c}} (t)}^{\gamma_{{}_{d}} (t)} \rho_c\ \phi_x\ dx\ dt - \int_0^\infty \int_{\gamma_{{}_{d}} (t)}^{\infty} ( \rho_c + \rho_d )\ \phi_x\ dx\ dt
	\\
		&= \int_0^\infty \rho_c \left[ \phi \left( \gamma_{{}_{c}} (t) , t \right) - \phi \left( \gamma_{{}_{d}} (t) , t \right) \right] dt + \int_0^\infty ( \rho_c + \rho_d )\ \phi \left( \gamma_{{}_{d}} (t) , t \right) dt
	\\
		&= \langle \rho_c \left( \delta_{x = \gamma_{{}_{c}} (t)} - \delta_{x = \gamma_{{}_{d}} (t)} \right) + ( \rho_c + \rho_d )\ \delta_{x = \gamma_{{}_{d}} (t)} , \phi \rangle	
	\\
		&= \langle \rho_c\ \delta_{x = \gamma_{{}_{c}} (t)} + \rho_d\ \delta_{x = \gamma_{{}_{d}} (t)} , \phi \rangle,
\end{aligned}
\]
so that $\rho$ is given by

\[
\rho = \rho_c\ \delta_{x = \gamma_{{}_{c}} (t)} + \rho_d\ \delta_{x = \gamma_{{}_{d}} (t)}.
\]
Under this subcase, our solution is graphically represented as follows:

\begin{center}
\includegraphics[scale=0.4]{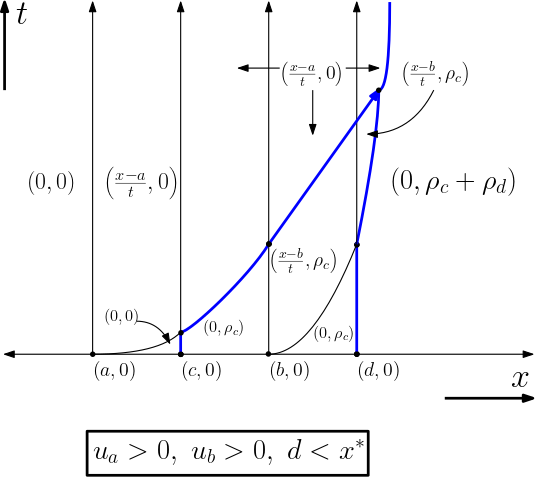}
\end{center}
The second subcase is $d = x^*$. The curve $\gamma_{{}_{d}}$ will be newly defined as

\[
\begin{aligned}
\gamma_{{}_{d}} (t) := \begin{cases}
d,\ &0 \leq t \leq t^*,
\\
a + \sqrt{2 ( u_a + u_b ) t},\ &t \geq t^*,
\end{cases}
\end{aligned} 
\]
and the same computations done for the previous subcase will give us

\[
\rho = \rho_c\ \delta_{x = \gamma_{{}_{c}} (t)} + \rho_d\ \delta_{x = \gamma_{{}_{d}} (t)}.
\]
Under this subcase, our solution is graphically represented as follows:
\begin{center}
\includegraphics[scale=0.4]{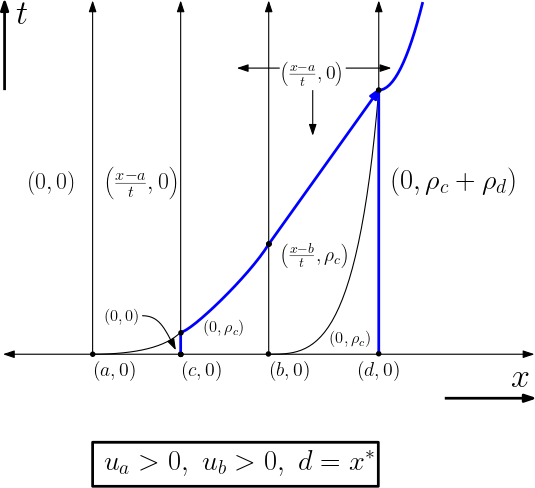}
\end{center}
The only subcase left for us to consider is $d > x^*$. We define
\[
\begin{aligned}
\gamma_{{}_{d}} (t) := \begin{cases}
d,\ &0 \leq t \leq t_{{}_{d}} := \frac{(d-b)^2}{2 u_b},
\\
a + \sqrt{2 ( u_a + u_b ) t},\ &t \geq t_{{}_{d}},
\end{cases}
\end{aligned}
\]
and proceeding along the same lines as in the previous subcases, we obtain

\[
\rho = \rho_c\ \delta_{x = \gamma_{{}_{c}} (t)} + \rho_d\ \delta_{x = \gamma_{{}_{d}} (t)}.
\]
The graphical representation of our solution is given as follows:
\begin{center}
\includegraphics[scale=0.4]{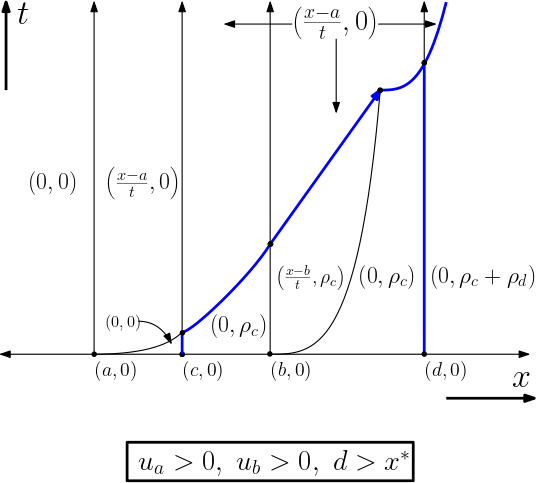}
\end{center}

\textbf{Case 3.} $u_a > 0$, $u_b < 0$, $u_a + u_b > 0$

The limit $\lim_{\epsilon \rightarrow 0} \left( u^\epsilon , R^\epsilon \right)$ is evaluated explicitly in different regions as follows:

\begin{itemize}

	\item $x < a$

\[
\begin{aligned}
\lim_{\epsilon \rightarrow 0} 
&\begin{pmatrix}
	\displaystyle{ \frac{\epsilon}{\sqrt{2 t \epsilon}} } \cdot \frac{\begin{aligned}
\frac{1 - e^{- \frac{| u_a |}{\epsilon}}}{e^{A_\epsilon^2}} + \frac{e^{- \frac{| u_a |}{\epsilon}} - e^{- \frac{| u_a + u_b |}{\epsilon}}}{e^{B_\epsilon^2}}	
	\end{aligned}}{\begin{aligned}
\sqrt{\pi} + \textnormal{erfc} \left( A_\epsilon \right) \left( e^{- \frac{| u_a |}{\epsilon}} - 1 \right) + \textnormal{erfc} \left( e^{- \frac{| u_a + u_b |}{\epsilon}} - e^{- \frac{| u_a |}{\epsilon}} \right)
	\end{aligned} }	
		\\
		\\
	\frac{
\begin{aligned}
\rho_c\ \textnormal{erfc} \left( B_\epsilon \right) e^{- \frac{| u_a + u_b |}{\epsilon}} \left( 1 - e^{- \frac{| u_b |}{\epsilon}} \right) \\
+ \rho_c\ \textnormal{erfc} \left( C_\epsilon \right) e^{- \frac{| u_a |}{\epsilon}} + \rho_d\ \textnormal{erfc} \left( D_\epsilon \right) e^{- \frac{| u_a + u_b |}{\epsilon}}
\end{aligned} 
}{ 
\begin{aligned}
\sqrt{\pi} + \textnormal{erfc} \left( A_\epsilon \right)\ \left( e^{- \frac{| u_a |}{\epsilon}} - 1 \right) + \textnormal{erfc} \left( B_\epsilon \right) e^{- \frac{| u_a + u_b |}{\epsilon}} \left( 1 - e^{- \frac{| u_b |}{\epsilon}} \right)
\end{aligned} }
\end{pmatrix}^T
\\
	= &\left( 0 , 0 \right).
\end{aligned}
\]
Before considering the remaining regions, let us introduce the curves

\begin{itemize}

	\item $p_a (s) := a + \sqrt{2 u_a s}$,

	\item $p_b (s) := b - \sqrt{- 2 u_b s}$,

	\item $l(s) := \frac{a+b}{2} + \frac{u_a + u_b}{b-a} s$

\end{itemize}
defined for every $s \geq 0$. We can now describe the explicit structure of the limit $\lim_{\epsilon \rightarrow 0} \left( u^\epsilon , R^\epsilon \right)$ as follows:

	\item $a < x < c$

\[
\begin{aligned}
\lim_{\epsilon \rightarrow 0} 
&\begin{pmatrix}
\displaystyle{ \frac{x-a}{2t} } \cdot
	\frac{\begin{aligned}
\frac{1 - e^{- \frac{| u_a |}{\epsilon}}}{A_\epsilon\ e^{A_\epsilon^2 - \frac{u_a}{\epsilon}}} + \frac{b-x}{x-a} \cdot \frac{e^{- \frac{| u_b |}{\epsilon}} - 1}{B_\epsilon\ e^{B_\epsilon^2 + \frac{u_b}{\epsilon}}}
\end{aligned}}{\begin{aligned}
	\sqrt{\pi} + \textnormal{erfc} \left( A_\epsilon \right) e^{\frac{| u_a |}{\epsilon}} \left( 1 - e^{- \frac{| u_a |}{\epsilon}} \right) + \textnormal{erfc} \left( B_\epsilon \right) e^{\frac{| u_b |}{\epsilon}} \left( 1 - e^{- \frac{| u_b |}{\epsilon}} \right)
\end{aligned} }	
		\\
		\\
	\frac{
\begin{aligned}
\rho_c\ \textnormal{erfc} \left( B_\epsilon \right) e^{\frac{| u_b |}{\epsilon}} \left( 1 - e^{- \frac{| u_b |}{\epsilon}} \right) 
\\
+ \rho_c\ \textnormal{erfc} \left( C_\epsilon \right) + \rho_d\ \textnormal{erfc} \left( D_\epsilon \right) e^{\frac{| u_b |}{\epsilon}}
\end{aligned} 
}{ 
\begin{aligned}
\sqrt{\pi} + \textnormal{erfc} \left( A_\epsilon \right) e^{\frac{| u_a |}{\epsilon}} \left( 1 - e^{- \frac{|
u_a |}{\epsilon}} \right) + \textnormal{erfc} \left( B_\epsilon \right) e^{\frac{| u_b |}{\epsilon}} \left( 1 - e^{- \frac{| u_b |}{\epsilon}} \right)
\end{aligned} }	
\end{pmatrix}^T
\\
	= &\begin{cases}
		\left( 0 , 0 \right),\ &{ x \in \Big( p_a (t) , p_b (t) \Big) },
		\\
		\left( \frac{x-b}{t} , \rho_c \right),\ &{ x \in \Big( \max{ \big\{ p_a (t) , p_b (t) \big\} } , \infty \Big) \cup \Big( \max{ \big\{ p_b (t) , l(t) \big\} } , p_a (t) \Big) },
		\\
		\left( \frac{x-a}{t} , 0 \right),\ &{ x \in \Big( -\infty , \min{ \big\{ p_a (t) , p_b (t) \big\} } \Big) \cup \Big( p_b (t) , \min{ \big\{ p_a (t) , l(t) \big\} } \Big) }.
		\end{cases}
\end{aligned}
\]

	\item $c < x < b$

\[
\begin{aligned}
\lim_{\epsilon \rightarrow 0} 
&\begin{pmatrix}
	\displaystyle{ \frac{x-a}{2t} } \cdot \frac{\begin{aligned}
	\frac{1 - e^{- \frac{| u_a |}{\epsilon}}}{A_\epsilon\ e^{A_\epsilon^2 - \frac{u_a}{\epsilon}}} + \frac{b-x}{x-a} \cdot \frac{e^{- \frac{| u_b |}{\epsilon}} - 1}{B_\epsilon\ e^{B_\epsilon^2 + \frac{u_b}{\epsilon}}}
\end{aligned}}{\begin{aligned}
	\sqrt{\pi} + \textnormal{erfc} \left( A_\epsilon \right) e^{\frac{| u_a |}{\epsilon}} \left( 1 - e^{- \frac{| u_a |}{\epsilon}} \right) + \textnormal{erfc} \left( B_\epsilon \right) e^{\frac{| u_b |}{\epsilon}} \left( 1 - e^{- \frac{| u_b |}{\epsilon}} \right)
\end{aligned} }	
		\\
		\\
	\frac{
\begin{aligned}
\sqrt{\pi}\ \rho_c + \rho_c\ \textnormal{erfc} \left( B_\epsilon \right) e^{\frac{| u_b |}{\epsilon}} \left( 1 - e^{- \frac{| u_b |}{\epsilon}} \right) 
\\
- \rho_c\ \textnormal{erfc} \left( C_\epsilon \right) + \rho_d\ \textnormal{erfc} \left( D_\epsilon \right) e^{\frac{| u_b |}{\epsilon}}
\end{aligned} 
}{ 
\begin{aligned}
\sqrt{\pi} + \textnormal{erfc} \left( A_\epsilon \right) e^{\frac{| u_a |}{\epsilon}} \left( 1 - e^{- \frac{| u_a |}{\epsilon}} \right) + \textnormal{erfc} \left( B_\epsilon \right) e^{\frac{| u_b |}{\epsilon}} \left( 1 - e^{- \frac{| u_b |}{\epsilon}} \right)
\end{aligned} }
\end{pmatrix}^T		 	
\\
	= &\begin{cases}
		\left( 0 , \rho_c \right),\ &{ x \in \Big( p_a (t) , p_b (t) \Big) },
		\\
		\left( \frac{x-b}{t} , \rho_c \right),\ &{ x \in \Big( \max{ \big\{ p_a (t) , p_b (t) \big\} } , \infty \Big) \cup \Big( \max{ \big\{ p_b (t) , l(t) \big\} } , p_a (t) \Big) },
		\\
		\left( \frac{x-a}{t} , 0 \right),\ &{ x \in \Big( -\infty , \min{ \big\{ p_a (t) , p_b (t) \big\} } \Big) \cup \Big( p_b (t) , \min{ \big\{ p_a (t) , l(t) \big\} } \Big) }.
		\end{cases}
\end{aligned}
\]

	\item $b < x < d$

\[
\begin{aligned}
\lim_{\epsilon \rightarrow 0} 
&\begin{pmatrix}
	\displaystyle{ \frac{x-a}{2t} } \cdot \frac{\begin{aligned}
\frac{1 - e^{- \frac{| u_a |}{\epsilon}}}{A_\epsilon\ e^{A_\epsilon^2 - \frac{u_a + u_b}{\epsilon}}} + \frac{x-b}{x-a} \cdot \frac{e^{- \frac{| u_b |}{\epsilon}} - 1}{B_\epsilon\ e^{B_\epsilon^2}}	
\end{aligned}}{\begin{aligned}
	\sqrt{\pi} + \textnormal{erfc} \left( A_\epsilon \right) e^{\frac{| u_a + u_b |}{\epsilon}} \left( 1 - e^{- \frac{| u_a |}{\epsilon}} \right) + \textnormal{erfc} \left( B_\epsilon \right) \left( e^{- \frac{| u_a |}{\epsilon}} - 1 \right)
\end{aligned} }
		\\
		\\
	\frac{
\begin{aligned}
\sqrt{\pi}\ \rho_c + \rho_c\ \textnormal{erfc} \left( B_\epsilon \right) \left( e^{- \frac{| u_b |}{\epsilon}} - 1 \right) 
\\
- \rho_c\ \textnormal{erfc} \left( C_\epsilon \right) e^{- \frac{| u_b |}{\epsilon}} + \rho_d\ \textnormal{erfc} \left( D_\epsilon \right)
\end{aligned} 
}{ 
\begin{aligned}
\sqrt{\pi} + \textnormal{erfc} \left( A_\epsilon \right) e^{\frac{| u_a + u_b |}{\epsilon}} \left( 1 - e^{- \frac{| u_a |}{\epsilon}} \right) + \textnormal{erfc} \left( B_\epsilon \right) \left( e^{- \frac{| u_b |}{\epsilon}} - 1 \right)
\end{aligned} }							
\end{pmatrix}^T
\\
	= &\begin{cases}
		\left( 0 , \rho_c \right),\ &x > a + \sqrt{2 ( u_a + u_b ) t},
		\\
		\left( \frac{x-a}{t} , 0 \right),\ &x < a + \sqrt{2 ( u_a + u_b ) t}.
		\end{cases}
\end{aligned}
\]

	\item $x > d$

\[
\begin{aligned}
\lim_{\epsilon \rightarrow 0} 
&\begin{pmatrix}
	\displaystyle{ \frac{x-a}{2t} } \cdot \frac{\begin{aligned}
\frac{1 - e^{- \frac{| u_a |}{\epsilon}}}{A_\epsilon\ e^{A_\epsilon^2 - \frac{u_a + u_b}{\epsilon}}} + \frac{x-b}{x-a} \cdot \frac{e^{- \frac{| u_b |}{\epsilon}} - 1}{B_\epsilon\ e^{B_\epsilon^2}}	
\end{aligned}}{\begin{aligned}
	\sqrt{\pi} + \textnormal{erfc} \left( A_\epsilon \right) e^{\frac{| u_a + u_b |}{\epsilon}} \left( 1 - e^{- \frac{| u_a |}{\epsilon}} \right) + \textnormal{erfc} \left( B_\epsilon \right) \left( e^{- \frac{| u_a |}{\epsilon}} - 1 \right)
\end{aligned} }
		\\
		\\
	\frac{
\begin{aligned}
\sqrt{\pi}\ \left( \rho_c + \rho_d \right) + \rho_c\ \textnormal{erfc} \left( B_\epsilon \right) \left( e^{- \frac{| u_b |}{\epsilon}} - 1 \right) 
\\
- \rho_c\ \textnormal{erfc} \left( C_\epsilon \right) e^{- \frac{| u_b |}{\epsilon}} - \rho_d\ \textnormal{erfc} \left( D_\epsilon \right)
\end{aligned} 
}{ 
\begin{aligned}
\sqrt{\pi} + \textnormal{erfc} \left( A_\epsilon \right) e^{\frac{| u_a + u_b |}{\epsilon}} \left( 1 - e^{- \frac{| u_a |}{\epsilon}} \right) + \textnormal{erfc} \left( B_\epsilon \right) \left( e^{- \frac{| u_b |}{\epsilon}} - 1 \right)
\end{aligned} }	
\end{pmatrix}^T				
\\
	= &\begin{cases}
		\left( 0 , \rho_c + \rho_d \right),\ &x > a + \sqrt{2 ( u_a + u_b ) t},
		\\
		\left( \frac{x-a}{t} , 0 \right),\ &x < a + \sqrt{2 ( u_a + u_b ) t}.
		\end{cases}
\end{aligned}
\]

\end{itemize}
To describe our solution, let us define the following curves:
\[
\begin{aligned}
\gamma_{{}_{a}} (t) &:= \begin{cases}
a + \sqrt{2 u_a t},\ &0 \leq t \leq t_{{}_{a,1}} := \frac{(b-a)^2}{2 \left( \sqrt{u_a} + \sqrt{- u_b} \right)^2},
\\
\frac{a+b}{2} + \frac{u_a + u_b}{b-a} t,\ &t_{{}_{a,1}} \leq t \leq t_{{}_{a,2}} := \frac{(b-a)^2}{2 ( u_a + u_b )},
\\
a + \sqrt{2 ( u_a + u_b ) t},\ &t \geq t_{{}_{a,2}},
\end{cases}
\\
\\
\gamma_{{}_{b,1}} (t) &:= \begin{cases}
b - \sqrt{- 2 u_b t},\ &0 \leq t \leq t_{{}_{b,1}} := \frac{(b-a)^2}{2 \left( \sqrt{u_a} + \sqrt{- u_b} \right)^2},
\\
\frac{a+b}{2} + \frac{u_a + u_b}{b-a} t,\ &t_{{}_{b,1}} \leq t \leq t_{{}_{b,2}} := \frac{(b-a)^2}{2 ( u_a + u_b )},
\\
a + \sqrt{2 ( u_a + u_b ) t},\ &t \geq t_{{}_{b,2}},
\end{cases}
\\
\\
\gamma_{{}_{b,2}} (t) &:= \begin{cases}
b,\ &0 \leq t \leq t_{{}_{b,2}},
\\
a + \sqrt{2 ( u_a + u_b ) t},\ &t \geq t_{{}_{b,2}}.
\end{cases}
\end{aligned}
\]
The velocity component $u$ can now be described as follows:

\[
\begin{aligned}
u(x,t) = \begin{cases}
0,\ &x \in \left( -\infty , a \right) \cup \left( \gamma_{{}_{a}} (t) , \gamma_{{}_{b,1}} (t) \right) \cup \left( \gamma_{{}_{b,2}} (t) , \infty \right),
\\
\frac{x-a}{t},\ &x \in \left( a , \gamma_{{}_{a}} (t) \right) ,
\\
\frac{x-b}{t},\ &x \in \left( \gamma_{{}_{b,1}} (t) , \gamma_{{}_{b,2}} (t) \right) .
\end{cases}
\end{aligned}
\]
For describing the density component $\rho$, let us first define
\[
\begin{aligned}
\gamma_{{}_{d}} (t) := \begin{cases}
d,\ &0 \leq t \leq t_{{}_{d,1}} := \frac{(d-a)^2}{2 ( u_a + u_b )},
\\
a + \sqrt{2 ( u_a + u_b ) t},\ &t \geq t_{{}_{d,1}}.
\end{cases}
\end{aligned}
\]
To proceed further, we have to consider three subcases. For this purpose, we define

\[
\begin{aligned}
\left( x^* , t^* \right) := \left( a + \frac{\left( b-a \right) \sqrt{u_a}}{\sqrt{u_a} + \sqrt{- u_b}} , \frac{(b-a)^2}{2 \left( \sqrt{u_a} + \sqrt{- u_b} \right)^2} \right).
\end{aligned}
\]
The first subcase is $c < x^*$. Define
\[
\begin{aligned}
\gamma_{{}_{c}} (t) := \begin{cases}
c,\ &0 \leq t \leq t_{{}_{c,1}} := \frac{(c-a)^2}{2 u_a},
\\
a + \sqrt{2 u_a t},\ &t_{{}_{c,1}} \leq t \leq t_{{}_{c,2}} := t^*,
\\
\frac{a+b}{2} + \frac{u_a + u_b}{b-a} t,\ &t_{{}_{c,2}} \leq t \leq t_{{}_{c,3}} := \frac{(b-a)^2}{2 ( u_a + u_b )},
\\
a + \sqrt{2 ( u_a + u_b ) t},\ &t \geq t_{{}_{c,3}}.
\end{cases}
\end{aligned}
\]
Now we observe that any test function $\phi \in C_c^\infty \left( (-\infty,\infty) \times [0,\infty) \right)$ satisfies

\[
\begin{aligned}
	\langle R_x , \phi \rangle 
		&= - \langle R , \phi_x \rangle
	\\
		&= - \int_0^\infty \int_{\gamma_c (t)}^{\gamma_d (t)} \rho_c\ \phi_x\ dx\ dt - \int_0^\infty \int_{\gamma_d (t)}^{\infty} ( \rho_c + \rho_d )\ \phi_x\ dx\ dt
	\\
		&= \int_0^\infty \rho_c \left[ \phi \left( \gamma_c (t) , t \right) - \phi \left( \gamma_d (t) , t \right) \right] dt + \int_0^\infty ( \rho_c + \rho_d )\ \phi \left( \gamma_d (t) , t \right)\ dt
	\\
		&= \langle \rho_c \left( \delta_{x = \gamma_c (t)} - \delta_{x = \gamma_d (t)} \right) + ( \rho_c + \rho_d )\ \delta_{x = \gamma_d (t)} , \phi \rangle
	\\
		&= \langle \rho_c\ \delta_{x = \gamma_c (t)} + \rho_d\ \delta_{x = \gamma_d (t)} , \phi \rangle,
\end{aligned}
\]
so that $\rho$ is given by

\[
\rho = \rho_c\ \delta_{x = \gamma_{{}_{c}} (t)} + \rho_d\ \delta_{x = \gamma_{{}_{d}} (t)}.
\]
Our solution can now be graphically represented as follows:

\begin{center}
\includegraphics[scale=0.3]{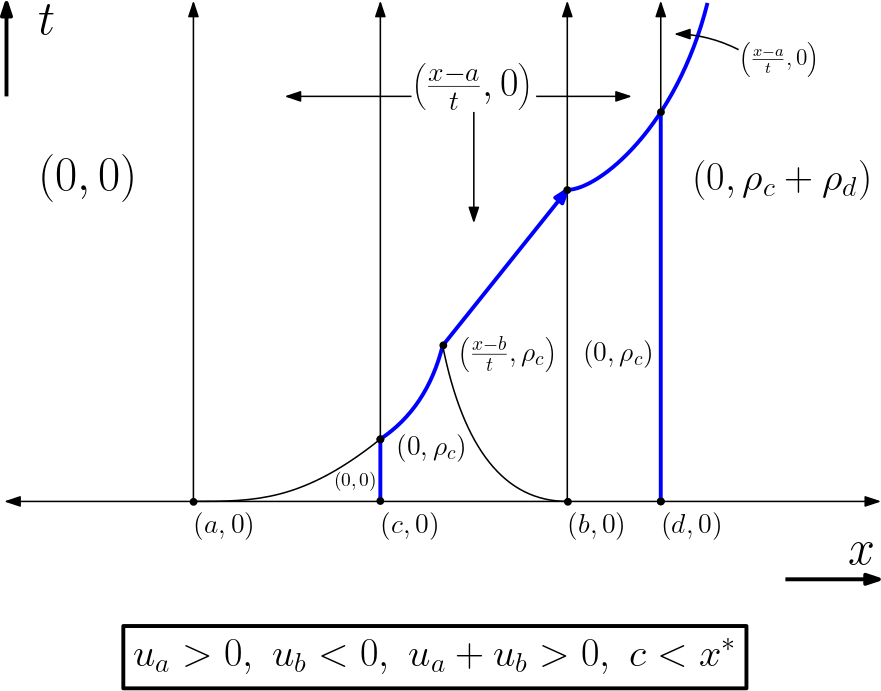}
\end{center}

The next subcase is $c = x^*$. Defining
\[
\begin{aligned}
\gamma_{{}_{c}} (t) := \begin{cases}
c,\ &0 \leq t \leq t_{{}_{c,1}} := t^*,
\\
\frac{a+b}{2} + \frac{u_a + u_b}{b-a} t,\ &t_{{}_{c,1}} \leq t \leq t_{{}_{c,2}} := \frac{(b-a)^2}{2 ( u_a + u_b )},
\\
a + \sqrt{2 ( u_a + u_b ) t},\ &t \geq t_{{}_{c,2}},
\end{cases}
\end{aligned}
\]
we can follow the same procedure as in the previous subcase leading us to the conclusion

\[
\rho = \rho_c\ \delta_{x = \gamma_{{}_{c}} (t)} + \rho_d\ \delta_{x = \gamma_{{}_{d}} (t)}.
\]
The graphical representation of our solution under this subcase is given as follows:

\begin{center}
\includegraphics[scale=0.3]{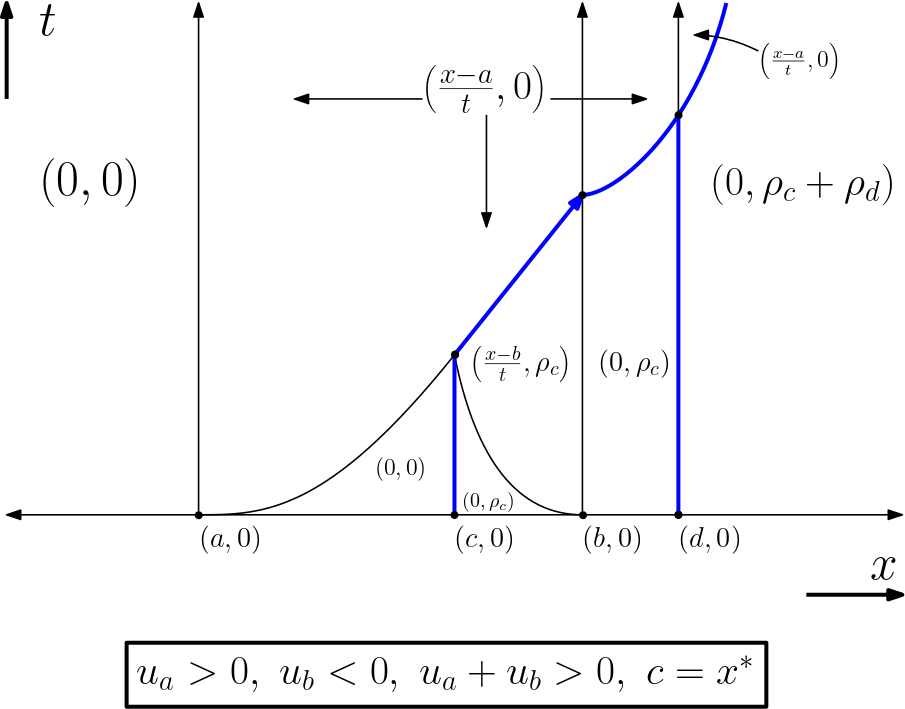}
\end{center}

The final subcase to be considered is $c > x^*$. We define

\[
\begin{aligned}
\gamma_{{}_{c}} (t) := \begin{cases}
c,\ &0 \leq t \leq t_{{}_{c,1}} := \frac{(b-c)^2}{- 2 u_b},
\\
b - \sqrt{- 2 u_b t},\ &t_{{}_{c,1}} \leq t \leq t_{{}_{c,2}} := t^*,
\\
\frac{a+b}{2} + \frac{u_a + u_b}{b-a} t,\ &t_{{}_{c,2}} \leq t \leq t_{{}_{c,3}} := \frac{(b-a)^2}{2 ( u_a + u_b )},
\\
a + \sqrt{2 ( u_a + u_b ) t},\ &t \geq t_{{}_{c,3}}.
\end{cases}
\end{aligned}
\]
The same computations as done in the previous subcases show that

\[
\rho = \rho_c\ \delta_{x = \gamma_{{}_{c}} (t)} + \rho_d\ \delta_{x = \gamma_{{}_{d}} (t)}.
\]
The graphical representation of our solution under this subcase is given as follows:

\begin{center}
\includegraphics[scale=0.6]{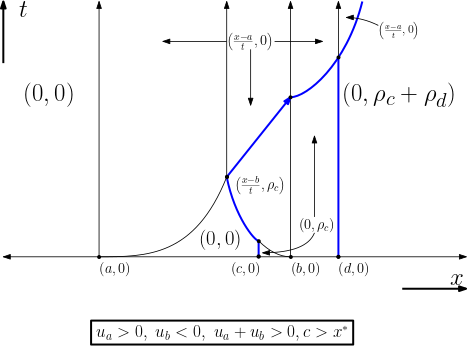}
\end{center}

\textbf{Case 4.} $u_a > 0$, $u_b < 0$, $u_a + u_b < 0$

The limit $\lim_{\epsilon \rightarrow 0} \left( u^\epsilon , R^\epsilon \right)$ is evaluated explicitly in different regions as follows:

\begin{itemize}

	\item $x < a$

\[
\begin{aligned}
\lim_{\epsilon \rightarrow 0} 
&\begin{pmatrix}
	\displaystyle{ \frac{b-x}{2t} } \cdot \frac{\begin{aligned}
\frac{a-x}{b-x} \cdot \frac{1 - e^{- \frac{| u_a |}{\epsilon}}}{A_\epsilon\ e^{A_\epsilon^2}} + \frac{e^{- \frac{| u_b |}{\epsilon}} - 1}{B_\epsilon\ e^{B_\epsilon^2 + \frac{u_a + u_b }{\epsilon}}}
\end{aligned}}{\begin{aligned}
	\sqrt{\pi} + \textnormal{erfc} \left( A_\epsilon \right) \left( e^{- \frac{| u_a |}{\epsilon}} - 1 \right) + \textnormal{erfc} \left(B_\epsilon \right) e^{\frac{| u_a + u_b |}{\epsilon}} \left( 1 - e^{- \frac{| u_b |}{\epsilon}} \right)
\end{aligned} }
		\\
		\\
	\frac{
\begin{aligned}
\rho_c\ \textnormal{erfc} \left( B_\epsilon \right) e^{\frac{| u_a + u_b |}{\epsilon}} \left( 1 - e^{- \frac{| u_b |}{\epsilon}} \right) \\
+ \rho_c\ \textnormal{erfc} \left( C_\epsilon \right) e^{- \frac{| u_a |}{\epsilon}} + \rho_d\ \textnormal{erfc} \left( D_\epsilon \right) e^{\frac{| u_a + u_b |}{\epsilon}}
\end{aligned} 
}{ 
\begin{aligned}
\sqrt{\pi} + \textnormal{erfc} \left( A_\epsilon \right)\ \left( e^{- \frac{| u_a |}{\epsilon}} - 1 \right) + \textnormal{erfc} \left( B_\epsilon \right) e^{\frac{| u_a + u_b |}{\epsilon}} \left( 1 - e^{- \frac{| u_b |}{\epsilon}} \right)
\end{aligned} }				
\end{pmatrix}^T
\\
	= &\begin{cases}
		\left( 0 , 0 \right),\ &x < b - \sqrt{- 2 ( u_a + u_b ) t},
		\\
		\left( \frac{x-b}{t} , \rho_c \right),\ &x > b - \sqrt{- 2 ( u_a + u_b ) t},
		\end{cases}
\end{aligned}
\]
Next, we introduce the curves

\begin{itemize}

	\item $p_a (s) := a + \sqrt{2 u_a s}$,

	\item $p_b (s) := b - \sqrt{- 2 u_b s}$,

	\item $l(s) := \frac{a+b}{2} + \frac{u_a + u_b}{b-a} s$

\end{itemize}
defined for every $s \geq 0$. The vanishing viscosity limits can then be described as follows:

	\item $a < x < c$

\[
\begin{aligned}
\lim_{\epsilon \rightarrow 0} 
&\begin{pmatrix}
	\displaystyle{ \frac{x-a}{2t} } \cdot \frac{\begin{aligned}
	\frac{1 - e^{- \frac{| u_a |}{\epsilon}}}{A_\epsilon\ e^{A_\epsilon^2 - \frac{u_a}{\epsilon}}} + \frac{b-x}{x-a} \cdot \frac{e^{- \frac{| u_b |}{\epsilon}} - 1}{B_\epsilon\ e^{B_\epsilon^2 + \frac{u_b}{\epsilon}}} 
\end{aligned}}{\begin{aligned}
	\sqrt{\pi} + \textnormal{erfc} \left( A_\epsilon \right) e^{\frac{| u_a |}{\epsilon}} \left( 1 - e^{- \frac{| u_a |}{\epsilon}} \right) + \textnormal{erfc} \left( B_\epsilon \right) e^{\frac{| u_b |}{\epsilon}} \left( 1 - e^{- \frac{| u_b |}{\epsilon}} \right)
\end{aligned}	 } 
		\\
		\\
	\frac{
\begin{aligned}
\rho_c\ \textnormal{erfc} \left( B_\epsilon \right) e^{\frac{| u_b |}{\epsilon}} \left( 1 - e^{- \frac{| u_b |}{\epsilon}} \right) 
\\
+ \rho_c\ \textnormal{erfc} \left( C_\epsilon \right) + \rho_d\ \textnormal{erfc} \left( D_\epsilon \right) e^{\frac{| u_b |}{\epsilon}}
\end{aligned} 
}{ 
\begin{aligned}
\sqrt{\pi} + \textnormal{erfc} \left( A_\epsilon \right) e^{\frac{| u_a |}{\epsilon}} \left( 1 - e^{- \frac{| u_a |}{\epsilon}} \right) + \textnormal{erfc} \left( B_\epsilon \right) e^{\frac{| u_b |}{\epsilon}} \left( 1 - e^{- \frac{| u_b |}{\epsilon}} \right)
\end{aligned} }									
\end{pmatrix}^T		 		
\\
	= &\begin{cases}
		\left( 0 , 0 \right),\ &{ x \in \Big( p_a (t) , p_b (t) \Big) },
		\\
		\left( \frac{x-b}{t} , \rho_c \right),\ &{ x \in \Big( \max{ \big\{ p_a (t) , p_b (t) \big\} } , \infty \Big) \cup \Big( \max{ \big\{ p_b (t) , l(t) \big\} } , p_a (t) \Big) },
		\\
		\left( \frac{x-a}{t} , 0 \right),\ &{ x \in \Big( -\infty , \min{ \big\{ p_a (t) , p_b (t) \big\} } \Big) \cup \Big( p_b (t) , \min{ \big\{ p_a (t) , l(t) \big\} } \Big) }.
		\end{cases}
\end{aligned}
\]
	
	\item $c < x < b$

\[
\begin{aligned}
\lim_{\epsilon \rightarrow 0} 
&\begin{pmatrix}
	\displaystyle{ \frac{x-a}{2t} } \cdot \frac{\begin{aligned}
	\frac{1 - e^{- \frac{| u_a |}{\epsilon}}}{A_\epsilon\ e^{A_\epsilon^2 - \frac{u_a}{\epsilon}}} + \frac{b-x}{x-a} \cdot \frac{e^{- \frac{| u_b |}{\epsilon}} - 1}{B_\epsilon\ e^{B_\epsilon^2 + \frac{u_b}{\epsilon}}} 
\end{aligned}}{\begin{aligned}
	\sqrt{\pi} + \textnormal{erfc} \left( A_\epsilon \right) e^{\frac{| u_a |}{\epsilon}} \left( 1 - e^{- \frac{| u_a |}{\epsilon}} \right) + \textnormal{erfc} \left( B_\epsilon \right) e^{\frac{| u_b |}{\epsilon}} \left( 1 - e^{- \frac{| u_b |}{\epsilon}} \right)
\end{aligned}	 } 
		\\
		\\
	\frac{
\begin{aligned}
\sqrt{\pi}\ \rho_c + \rho_c\ \textnormal{erfc} \left( B_\epsilon \right) e^{\frac{| u_b |}{\epsilon}} \left( 1 - e^{- \frac{| u_b |}{\epsilon}} \right) 
\\
- \rho_c\ \textnormal{erfc} \left( C_\epsilon \right) + \rho_d\ \textnormal{erfc} \left( D_\epsilon \right) e^{\frac{| u_b |}{\epsilon}}
\end{aligned} 
}{ 
\begin{aligned}
\sqrt{\pi} + \textnormal{erfc} \left( A_\epsilon \right) e^{\frac{| u_a |}{\epsilon}} \left( 1 - e^{- \frac{| u_a |}{\epsilon}} \right) + \textnormal{erfc} \left( B_\epsilon \right) e^{\frac{| u_b |}{\epsilon}} \left( 1 - e^{- \frac{| u_b |}{\epsilon}} \right)
\end{aligned} }	
\end{pmatrix}^T
\\
	= &\begin{cases}
		\left( 0 , \rho_c \right),\ &{ x \in \Big( p_a (t) , p_b (t) \Big) },
		\\
		\left( \frac{x-b}{t} , \rho_c \right),\ &{ x \in \Big( \max{ \big\{ p_a (t) , p_b (t) \big\} } , \infty \Big) \cup \Big( \max{ \big\{ p_b (t) , l(t) \big\} } , p_a (t) \Big) },
		\\
		\left( \frac{x-a}{t} , 0 \right),\ &{ x \in \Big( -\infty , \min{ \big\{ p_a (t) , p_b (t) \big\} } \Big) \cup \Big( p_b (t) , \min{ \big\{ p_a (t) , l(t) \big\} } \Big) }.
		\end{cases}
\end{aligned}
\]

	\item $b < x < d$

\[
\begin{aligned}
\lim_{\epsilon \rightarrow 0} 
&\begin{pmatrix}
	\displaystyle{ \frac{\epsilon}{\sqrt{2 t \epsilon}} } \cdot \frac{\begin{aligned}
\frac{\left( 1 - e^{- \frac{| u_a |}{\epsilon}} \right) e^{- \frac{| u_a + u_b |}{\epsilon}}}{e^{A_\epsilon^2}} + \frac{e^{- \frac{| u_b |}{\epsilon}} - 1}{e^{B_\epsilon^2}}
\end{aligned}}{\begin{aligned}
	\sqrt{\pi} + \textnormal{erfc} \left( A_\epsilon \right) e^{- \frac{| u_a + u_b |}{\epsilon}} \left( 1 - e^{- \frac{| u_a |}{\epsilon}} \right) + \textnormal{erfc} \left( B_\epsilon \right) \left( e^{- \frac{| u_b |}{\epsilon}} - 1 \right)
\end{aligned} }
		\\
		\\
	\frac{
\begin{aligned}
\sqrt{\pi}\ \rho_c + \rho_c\ \textnormal{erfc} \left( B_\epsilon \right) \left( e^{- \frac{| u_b |}{\epsilon}} - 1 \right) 
\\
- \rho_c\ \textnormal{erfc} \left( C_\epsilon \right) e^{- \frac{| u_b |}{\epsilon}} + \rho_d\ \textnormal{erfc} \left( D_\epsilon \right)
\end{aligned} 
}{ 
\begin{aligned}
\sqrt{\pi} + \textnormal{erfc} \left( A_\epsilon \right) e^{- \frac{| u_a + u_b |}{\epsilon}} \left( 1 - e^{- \frac{| u_a |}{\epsilon}} \right) + \textnormal{erfc} \left( B_\epsilon \right) \left( e^{- \frac{| u_b |}{\epsilon}} - 1 \right)
\end{aligned} }	
\end{pmatrix}^T
\\
	= &\left( 0 , \rho_c \right).
\end{aligned}
\]

	\item $x > d$

\[
\begin{aligned}
\lim_{\epsilon \rightarrow 0} 
&\begin{pmatrix}
	\displaystyle{ \frac{\epsilon}{\sqrt{2 t \epsilon}} } \cdot \frac{\begin{aligned}
\frac{\left( 1 - e^{- \frac{| u_a |}{\epsilon}} \right) e^{- \frac{| u_a + u_b |}{\epsilon}}}{e^{A_\epsilon^2}} + \frac{e^{- \frac{| u_b |}{\epsilon}} - 1}{e^{B_\epsilon^2}}
\end{aligned}}{\begin{aligned}
	\sqrt{\pi} + \textnormal{erfc} \left( A_\epsilon \right) e^{- \frac{| u_a + u_b |}{\epsilon}} \left( 1 - e^{- \frac{| u_a |}{\epsilon}} \right) + \textnormal{erfc} \left( B_\epsilon \right) \left( e^{- \frac{| u_b |}{\epsilon}} - 1 \right)
\end{aligned} }
		\\
		\\
	\frac{
\begin{aligned}
\sqrt{\pi}\ \left( \rho_c + \rho_d \right) + \rho_c\ \textnormal{erfc} \left( B_\epsilon \right) \left( e^{- \frac{| u_b |}{\epsilon}} - 1 \right) 
\\
- \rho_c\ \textnormal{erfc} \left( C_\epsilon \right) e^{- \frac{| u_b |}{\epsilon}} - \rho_d\ \textnormal{erfc} \left( D_\epsilon \right)
\end{aligned} 
}{ 
\begin{aligned}
\sqrt{\pi} + \textnormal{erfc} \left( A_\epsilon \right) e^{- \frac{| u_a + u_b |}{\epsilon}} \left( 1 - e^{- \frac{| u_a |}{\epsilon}} \right) + \textnormal{erfc} \left( B_\epsilon \right) \left( e^{- \frac{| u_b |}{\epsilon}} - 1 \right)
\end{aligned} }	
\end{pmatrix}^T		 	
\\
	= &\left( 0 , \rho_c + \rho_d \right).
\end{aligned}
\]

\end{itemize}
For describing our solution, let us introduce the following curves:

\[
\begin{aligned}
\gamma_{{}_{a,1}} (t) &:= \begin{cases}
a,\ &0 \leq t \leq t_{{}_{a,2}} := \frac{(b-a)^2}{- 2 ( u_a + u_b )},
\\
b - \sqrt{- 2 ( u_a + u_b ) t},\ &t \geq t_{{}_{a,2}},
\end{cases}
\\
\\
\gamma_{{}_{a,2}} (t) &:= \begin{cases}
a + \sqrt{2 u_a t},\ &0 \leq t \leq t_{{}_{a,1}} := \frac{(b-a)^2}{2 \left( \sqrt{u_a} + \sqrt{- u_b} \right)^2},
\\
\frac{a+b}{2} + \frac{u_a + u_b}{b-a} t,\ &t_{{}_{a,1}} \leq t \leq t_{{}_{a,2}},
\\
b - \sqrt{- 2 ( u_a + u_b ) t},\ &t \geq t_{{}_{a,2}},
\end{cases}
\\
\\
\gamma_{{}_{b}} (t) &:= \begin{cases}
b - \sqrt{- 2 u_b t},\ &0 \leq t \leq t_{{}_{b,1}} := \frac{(b-a)^2}{2 \left( \sqrt{u_a} + \sqrt{- u_b} \right)^2}
\\
\frac{a+b}{2} + \frac{u_a + u_b}{b-a} t,\ &t_{{}_{b,1}} \leq t \leq t_{{}_{b,2}} := \frac{(b-a)^2}{- 2 ( u_a + u_b )},
\\
b - \sqrt{- 2 ( u_a + u_b ) t},\ &t \geq t_{{}_{b,2}},
\end{cases}
\\
\\
\gamma_{{}_{d}} (t) &:= d,\ t \geq 0.
\end{aligned}
\]
The velocity component $u$ can now be described as follows:

\[
\begin{aligned}
u(x,t) = \begin{cases}
0,\ &x \in \left( -\infty , \gamma_{{}_{a,1}} (t) \right) \cup \Big( \gamma_{{}_{a,2}} (t) , \gamma_{{}_{b}} (t) \Big) \cup \Big( b , \infty \Big),
\\
\frac{x-a}{t},\ &x \in \Big( \gamma_{{}_{a,1}} (t) , \gamma_{{}_{a,2}} (t) \Big),
\\
\frac{x-b}{t},\ &x \in \Big( \gamma_{{}_{b}} (t) , b \Big).
\end{cases}
\end{aligned} 
\] 
To describe the density component $\rho$, let us first consider the coordinates
\[
\begin{aligned}
\left( x^* , t^* \right) := \left( a + \frac{\left( b-a \right) \sqrt{u_a}}{\sqrt{u_a} + \sqrt{- u_b}} , \frac{(b-a)^2}{2 \left( \sqrt{u_a} + \sqrt{-u_b} \right)^2} \right).
\end{aligned}
\]
There are three subcases to be considered before we can proceed further.

First let us consider the subcase $c < x^*$. Define the curve
\[
\begin{aligned}
\gamma_{{}_{c}} (t) := \begin{cases}
c,\ &0 \leq t \leq t_{{}_{c,1}} := \frac{(c-a)^2}{2 u_a},
\\
a + \sqrt{2 u_a t},\ &t_{{}_{c,1}} \leq t \leq t_{{}_{c,2}} := t^*,
\\
\frac{a+b}{2} + \frac{u_a + u_b}{b-a} t,\ &t_{{}_{c,2}} \leq t \leq t_{{}_{c,3}} := \frac{(b-a)^2}{- 2 ( u_a + u_b )},
\\
b - \sqrt{- 2 ( u_a + u_b ) t},\ &t \geq t_{{}_{c,3}}.
\end{cases}
\end{aligned}
\]
Now any test function $\phi \in C_c^\infty \left( (-\infty,\infty) \times [0,\infty) \right)$ satisfies

\[
\begin{aligned}
	\langle R_x , \phi \rangle 
		&= - \langle R , \phi_x \rangle
	\\
		&= - \int_0^\infty \int_{\gamma_{{}_{c}} (t)}^{\gamma_d (t)} \rho_c\ \phi_x\ dx\ dt - \int_0^\infty \int_{\gamma_{{}_{s}} (t)}^{\infty} ( \rho_c + \rho_d )\ \phi_x\ dx\ dt
	\\
		&= \int_0^\infty \rho_c \left[ \phi \left( \gamma_{{}_{c}} (t) , t \right) - \phi \left( \gamma_{{}_{d}} (t) , t \right) \right] dt + \int_0^\infty ( \rho_c + \rho_d )\ \phi \left( \gamma_{{}_{d}} (t) , t \right)\ dt
	\\
		&= \langle \rho_c \left( \delta_{x = \gamma_{{}_{c}} (t)} - \delta_{x = \gamma_{{}_{d}} (t)} \right) + ( \rho_c + \rho_d )\ \delta_{x = \gamma_{{}_{d}} (t)} , \phi \rangle
	\\
		&= \langle \rho_c\ \delta_{x = \gamma_{{}_{c}} (t)} + \rho_d\ \delta_{x = \gamma_{{}_{d}} (t)} , \phi \rangle,
\end{aligned}
\]
so that $\rho$ is given by

\[
\rho = \rho_c\ \delta_{x = \gamma_{{}_{c}} (t)} + \rho_d\ \delta_{x = \gamma_{{}_{d}} (t)}.
\]
This solution can now be graphically represented as follows:

\begin{center}
\includegraphics[scale=0.5]{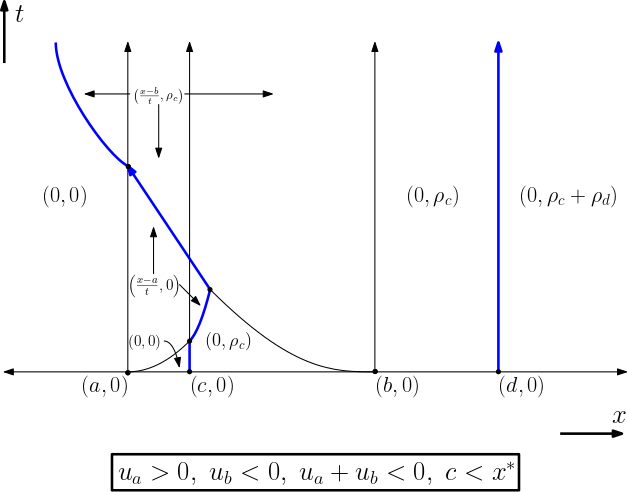}
\end{center}

Next we consider the second subcase $c = x^*$. Defining the curve $\gamma_{{}_{c}}$ by

\[
\begin{aligned}
\gamma_{{}_{c}} (t) := \begin{cases}
c,\ &0 \leq t \leq t_{{}_{c,1}} := t^*,
\\
\frac{a+b}{2} + \frac{u_a + u_b}{b-a} t,\ &t_{{}_{c,1}} \leq t \leq t_{{}_{c,2}} := \frac{(b-a)^2}{- 2 ( u_a + u_b )},
\\
b - \sqrt{- 2 ( u_a + u_b ) t},\ &t \geq t_{{}_{c,2}}
\end{cases}
\end{aligned}
\]
and proceeding with the computations as in the previous subcase, we will get

\[
\rho = \rho_c\ \delta_{x = \gamma_{{}_{c}} (t)} + \rho_d\ \delta_{x = \gamma_{{}_{d}} (t)}.
\] 
The graphical representation of our solution under this subcase is given below:

\begin{center}
\includegraphics[scale=0.56]{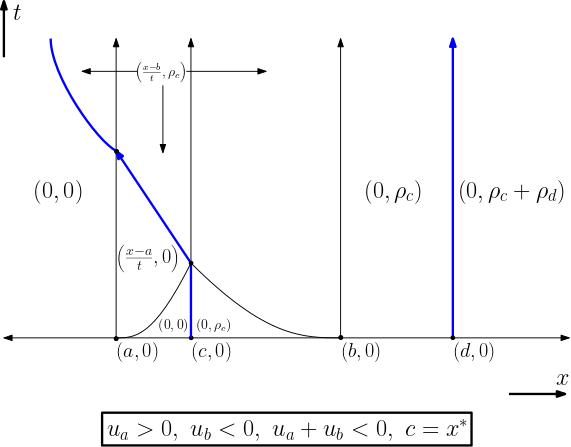}
\end{center}

The final subcase is $c > x^*$. We only define

\[
\begin{aligned}
\gamma_{{}_{c}} (t) := \begin{cases}
c,\ &0 \leq t \leq t_{{}_{c,1}} := \frac{(b-c)^2}{- 2 u_b},
\\
b - \sqrt{- 2 u_b t},\ &t_{{}_{c,1}} \leq t \leq t_{{}_{c,2}} := t^*,
\\
\frac{a+b}{2} + \frac{u_a + u_b}{b-a} t,\ &t_{{}_{c,2}} \leq t \leq t_{{}_{c,3}} := \frac{(b-a)^2}{- 2 ( u_a + u_b )},
\\
b - \sqrt{- 2 ( u_a + u_b ) t},\ &t \geq t_{{}_{c,3}}
\end{cases}
\end{aligned}
\]
and observe that the same computations as done for the previous two subcases lead us to the conclusion that

\[
\rho = \rho_c\ \delta_{x = \gamma_{{}_{c}} (t)} + \rho_d\ \delta_{x = \gamma_{{}_{d}} (t)}.
\] 
Under this subcase, our solution can be graphically represented as follows:

\begin{center}
\includegraphics[scale=0.52]{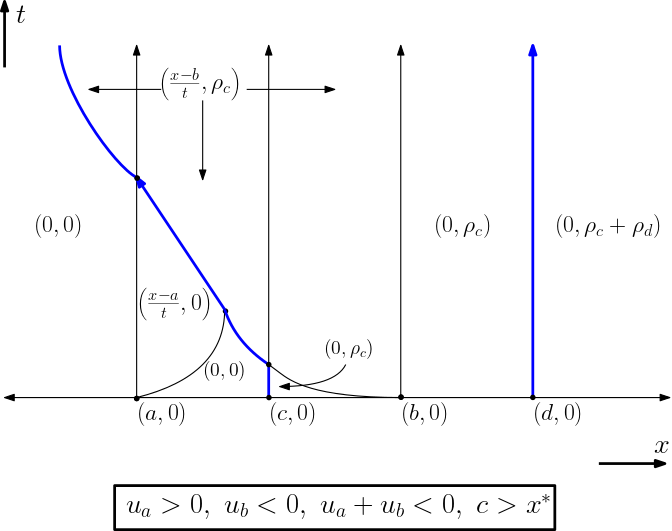}
\end{center}

\textbf{Case 5.} $u_a > 0$, $u_b < 0$, $u_a + u_b = 0$

The limit $\lim_{\epsilon \rightarrow 0} \left( u^\epsilon , R^\epsilon \right)$ is evaluated explicitly in different regions as follows:

\begin{itemize}

	\item $x < a$

\[
\begin{aligned}
\lim_{\epsilon \rightarrow 0} 
&\begin{pmatrix}
	\displaystyle{ \frac{\epsilon}{\sqrt{2 t \epsilon}} } \cdot \frac{\begin{aligned}
	\frac{1 - e^{- \frac{| u_a |}{\epsilon}}}{e^{A_\epsilon^2}} + \frac{e^{- \frac{| u_b |}{\epsilon}} - 1}{e^{B_\epsilon^2}}
\end{aligned}}{\begin{aligned}
	\sqrt{\pi} + \textnormal{erfc} \left( A_\epsilon \right) \left( e^{- \frac{| u_a |}{\epsilon}} - 1 \right) + \textnormal{erfc} \left( B_\epsilon \right) \left( 1 - e^{- \frac{| u_b |}{\epsilon}} \right)
\end{aligned} }
		\\
		\\
	\frac{
\begin{aligned}
\rho_c\ \textnormal{erfc} \left( B_\epsilon \right)\ \left( 1 - e^{- \frac{| u_b |}{\epsilon}} \right) \\
+ \rho_c\ \textnormal{erfc} \left( C_\epsilon \right) e^{- \frac{| u_b |}{\epsilon}} + \rho_d\ \textnormal{erfc} \left( D_\epsilon \right)
\end{aligned} 
}{ 
\begin{aligned}
\sqrt{\pi} + \textnormal{erfc} \left( A_\epsilon \right)\ \left( e^{- \frac{| u_a |}{\epsilon}} - 1 \right) + \textnormal{erfc} \left( B_\epsilon \right)\ \left( 1 - e^{- \frac{| u_b |}{\epsilon}} \right)
\end{aligned} }		
\end{pmatrix}^T		 		
\\
	= &\left( 0 , 0 \right).
\end{aligned}
\]

	\item $a < x < c$

\[
\begin{aligned}
\lim_{\epsilon \rightarrow 0} 
&\begin{pmatrix}
	\displaystyle{ \frac{x-a}{2t} } \cdot \frac{\begin{aligned}
	\frac{1 - e^{- \frac{| u_a |}{\epsilon}}}{A_\epsilon\ e^{A_\epsilon^2 - \frac{u_a}{\epsilon}}} + \frac{b-x}{x-a} \cdot \frac{e^{- \frac{| u_b |}{\epsilon}} - 1}{B_\epsilon\ e^{B_\epsilon^2 + \frac{u_b}{\epsilon}}}
\end{aligned}}{\begin{aligned}
	\sqrt{\pi} + \textnormal{erfc} \left( A_\epsilon \right) e^{\frac{| u_a |}{\epsilon}} \left( 1 - e^{- \frac{| u_a |}{\epsilon}} \right) + \textnormal{erfc} \left( B_\epsilon \right) e^{\frac{| u_b |}{\epsilon}} \left( 1 - e^{- \frac{| u_b |}{\epsilon}} \right)
\end{aligned} }
		\\
		\\
	\frac{
\begin{aligned}
\rho_c\ \textnormal{erfc} \left( B_\epsilon \right) e^{\frac{| u_b |}{\epsilon}} \left( 1 - e^{- \frac{| u_b |}{\epsilon}} \right) 
\\
+ \rho_c\ \textnormal{erfc} \left( C_\epsilon \right) + \rho_d\ \textnormal{erfc} \left( D_\epsilon \right) e^{\frac{| u_b |}{\epsilon}}
\end{aligned} 
}{ 
\begin{aligned}
\sqrt{\pi} + \textnormal{erfc} \left( A_\epsilon \right) e^{\frac{| u_a |}{\epsilon}} \left( 1 - e^{- \frac{| u_a |}{\epsilon}} \right) + \textnormal{erfc} \left( B_\epsilon \right) e^{\frac{| u_b |}{\epsilon}} \left( 1 - e^{- \frac{| u_b |}{\epsilon}} \right)
\end{aligned} }				
\end{pmatrix}^T		 		
\\
	= &\begin{cases}
		\left( 0 , 0 \right),\ &{ x \in \Big( p_a (t) , p_b (t) \Big) },
		\\
		\left( \frac{x-b}{t} , \rho_c \right),\ &{ x \in \Big( \max{ \big\{ p_a (t) , p_b (t) \big\} } , \infty \Big) \cup \Big( \max{ \big\{ p_b (t) , \frac{a+b}{2} \big\} } , p_a (t) \Big) },
		\\
		\left( \frac{x-a}{t} , 0 \right),\ &{ x \in \Big( -\infty , \min{ \big\{ p_a (t) , p_b (t) \big\} } \Big) \cup \Big( p_b (t) , \min{ \big\{ p_a (t) , \frac{a+b}{2} \big\} } \Big) }.
		\end{cases}
\end{aligned}
\]

	\item $c < x < b$

\[
\begin{aligned}
\lim_{\epsilon \rightarrow 0} 
&\begin{pmatrix}
	\displaystyle{ \frac{x-a}{2t} } \cdot \frac{\begin{aligned}
	\frac{1 - e^{- \frac{| u_a |}{\epsilon}}}{A_\epsilon\ e^{A_\epsilon^2 - \frac{u_a}{\epsilon}}} + \frac{b-x}{x-a} \cdot \frac{e^{- \frac{| u_b |}{\epsilon}} - 1}{B_\epsilon\ e^{B_\epsilon^2 + \frac{u_b}{\epsilon}}}
\end{aligned}}{\begin{aligned}
	\sqrt{\pi} + \textnormal{erfc} \left( A_\epsilon \right) e^{\frac{| u_a |}{\epsilon}} \left( 1 - e^{- \frac{| u_a |}{\epsilon}} \right) + \textnormal{erfc} \left( B_\epsilon \right) e^{\frac{| u_b |}{\epsilon}} \left( 1 - e^{- \frac{| u_b |}{\epsilon}} \right)
\end{aligned} }
		\\
		\\
	\frac{
\begin{aligned}
\sqrt{\pi}\ \rho_c + \rho_c\ \textnormal{erfc} \left( B_\epsilon \right) e^{\frac{| u_b |}{\epsilon}} \left( 1 - e^{- \frac{| u_b |}{\epsilon}} \right) 
\\
- \rho_c\ \textnormal{erfc} \left( C_\epsilon \right) + \rho_d\ \textnormal{erfc} \left( D_\epsilon \right) e^{\frac{| u_b |}{\epsilon}}
\end{aligned} 
}{ 
\begin{aligned}
\sqrt{\pi} + \textnormal{erfc} \left( A_\epsilon \right) e^{\frac{| u_a |}{\epsilon}} \left( 1 - e^{- \frac{| u_a |}{\epsilon}} \right) + \textnormal{erfc} \left( B_\epsilon \right) e^{\frac{| u_b |}{\epsilon}} \left( 1 - e^{- \frac{| u_b |}{\epsilon}} \right)
\end{aligned} }		
\end{pmatrix}^T		 		
\\
	= &\begin{cases}
		\left( 0 , \rho_c \right),\ &{ x \in \Big( p_a (t) , p_b (t) \Big) },
		\\
		\left( \frac{x-b}{t} , \rho_c \right),\ &{ x \in \Big( \max{ \big\{ p_a (t) , p_b (t) \big\} } , \infty \Big) \cup \Big( \max{ \big\{ p_b (t) , \frac{a+b}{2} \big\} } , p_a (t) \Big) },
		\\
		\left( \frac{x-a}{t} , 0 \right),\ &{ x \in \Big( -\infty , \min{ \big\{ p_a (t) , p_b (t) \big\} } \Big) \cup \Big( p_b (t) , \min{ \big\{ p_a (t) , \frac{a+b}{2} \big\} } \Big) }.
		\end{cases}
\end{aligned}
\]

	\item $b < x < d$

\[
\begin{aligned}
\lim_{\epsilon \rightarrow 0} 
&\begin{pmatrix}
	\displaystyle{ \frac{\epsilon}{\sqrt{2 t \epsilon}} } \cdot \frac{\begin{aligned}
	\frac{1 - e^{- \frac{| u_a |}{\epsilon}}}{e^{A_\epsilon^2}} + \frac{e^{- \frac{| u_b |}{\epsilon}} - 1}{e^{B_\epsilon^2}}
\end{aligned}}{\begin{aligned}
	\sqrt{\pi} + \textnormal{erfc} \left( A_\epsilon \right) \left( 1 - e^{- \frac{| u_a |}{\epsilon}} \right) + \textnormal{erfc} \left( B_\epsilon \right) \left( e^{- \frac{| u_b |}{\epsilon}} - 1 \right)
\end{aligned} }
		\\
		\\
	\frac{
\begin{aligned}
\sqrt{\pi}\ \rho_c + \rho_c\ \textnormal{erfc} \left( B_\epsilon \right) \left( e^{- \frac{| u_b |}{\epsilon}} - 1 \right) 
\\
- \rho_c\ \textnormal{erfc} \left( C_\epsilon \right) e^{- \frac{| u_b |}{\epsilon}} + \rho_d\ \textnormal{erfc} \left( D_\epsilon \right)
\end{aligned} 
}{ 
\begin{aligned}
\sqrt{\pi} + \textnormal{erfc} \left( A_\epsilon \right) \left( 1 - e^{- \frac{| u_a |}{\epsilon}} \right) + \textnormal{erfc} \left( B_\epsilon \right) \left( e^{- \frac{| u_b |}{\epsilon}} - 1 \right)
\end{aligned} }		
\end{pmatrix}^T		 		
\\
	= &\left( 0 , \rho_c \right).
\end{aligned}
\]

	\item $x > d$

\[
\begin{aligned}
\lim_{\epsilon \rightarrow 0} 
&\begin{pmatrix}
	\displaystyle{ \frac{\epsilon}{\sqrt{2 t \epsilon}} } \cdot \frac{\begin{aligned}
	\frac{1 - e^{- \frac{| u_a |}{\epsilon}}}{e^{A_\epsilon^2}} + \frac{e^{- \frac{| u_b |}{\epsilon}} - 1}{e^{B_\epsilon^2}}
\end{aligned}}{\begin{aligned}
	\sqrt{\pi} + \textnormal{erfc} \left( A_\epsilon \right) \left( 1 - e^{- \frac{| u_a |}{\epsilon}} \right) + \textnormal{erfc} \left( B_\epsilon \right) \left( e^{- \frac{| u_b |}{\epsilon}} - 1 \right)
\end{aligned} }
		\\
		\\
	\frac{
\begin{aligned}
\sqrt{\pi}\ \left( \rho_c + \rho_d \right) + \rho_c\ \textnormal{erfc} \left( B_\epsilon \right) \left( e^{- \frac{| u_b |}{\epsilon}} - 1 \right) 
\\
- \rho_c\ \textnormal{erfc} \left( C_\epsilon \right) e^{- \frac{| u_b |}{\epsilon}} - \rho_d\ \textnormal{erfc} \left( D_\epsilon \right)
\end{aligned} 
}{ 
\begin{aligned}
\sqrt{\pi} + \textnormal{erfc} \left( A_\epsilon \right) \left( 1 - e^{- \frac{| u_a |}{\epsilon}} \right) + \textnormal{erfc} \left( B_\epsilon \right) \left( e^{- \frac{| u_b |}{\epsilon}} - 1 \right)
\end{aligned} }		
\end{pmatrix}^T		 		
\\
	= &\left( 0 , \rho_c + \rho_d \right).
\end{aligned}
\]

\end{itemize}
To describe our solution, let us define the following curves:

\[
\begin{aligned}
\gamma_{{}_{a}} (t) &:= \begin{cases} 
a + \sqrt{2 u_a t},\ &0 \leq t \leq t_{{}_{a,1}} := \frac{(b-a)^2}{2 \left( \sqrt{u_a} + \sqrt{- u_b} \right)^2},
\\
\frac{a+b}{2},\ &t \geq t_{{}_{a,1}},
\end{cases}
\\
\\
\gamma_{{}_{b}} (t) &:= \begin{cases} 
b - \sqrt{- 2 u_b t},\ &0 \leq t \leq t_{{}_{b,1}} := \frac{(b-a)^2}{2 \left( \sqrt{u_a} + \sqrt{- u_b} \right)^2},
\\
\frac{a+b}{2},\ &t \geq t_{{}_{b,1}},
\end{cases}
\end{aligned}
\]
Then we have the following explicit representation of our velocity component $u$:

\[
u(x,t) = \begin{cases}
0,\ &x \in \Big( -\infty , a \Big) \cup \Big( \gamma_{{}_{a}} (t) , \gamma_{{}_{b}} (t) \Big) \cup \Big( b , \infty \Big),
\\
\frac{x-a}{t},\ &x \in \Big( a , \gamma_{{}_{a}} (t) \Big),
\\
\frac{x-b}{t},\ &x \in \Big( \gamma_{{}_{b}} (t) , b \Big).
\end{cases}
\]
To describe the density component $\rho$, we have to consider 3 subcases.

The first subcase is $c < \frac{a+b}{2}$. Define 
\[
\begin{aligned}
\gamma_{{}_{c}} (t) := \begin{cases}
c,\ &0 \leq t \leq t_{{}_{c,1}} := \frac{(c-a)^2}{2 u_a},
\\
a + \sqrt{2 u_a t},\ &t_{{}_{c,1}} \leq t \leq t_{{}_{c,2}} := \frac{(b-a)^2}{2 \left( \sqrt{u_a} + \sqrt{- u_b} \right)^2},
\\
\frac{a+b}{2},\ &t \geq t_{{}_{c,2}}.
\end{cases}
\end{aligned}
\]
Now any test function $\phi \in C_c^\infty \left( (-\infty,\infty) \times [0,\infty) \right)$ will satisfy

\[
\begin{aligned}
	\langle R_x , \phi \rangle 
		&= - \langle R , \phi_x \rangle
	\\
		&= - \int_0^\infty \int_{\gamma_c (t)}^{d} \rho_c\ \phi_x\ dx\ dt - \int_0^\infty \int_{d}^{\infty} ( \rho_c + \rho_d )\ \phi_x\ dx\ dt
	\\
		&= \int_0^\infty \rho_c \left[ \phi \left( \gamma_c (t) , t \right) - \phi \left( d , t \right) \right] dt + \int_0^\infty ( \rho_c + \rho_d )\ \phi \left( d , t \right)\ dt
	\\
		&= \langle \rho_c \left( \delta_{x = \gamma_c (t)} - \delta_{x = d} \right) + ( \rho_c + \rho_d )\ \delta_{x = d} , \phi \rangle
	\\
		&= \langle \rho_c\ \delta_{x = \gamma_c (t)} + \rho_d\ \delta_{x = d} , \phi \rangle.
\end{aligned}
\]
Therefore the density component $\rho$ can be described as

\[
\rho = \rho_c\ \delta_{x = \gamma_c (t)} + \rho_d\ \delta_{x = \gamma_{{}_{d}} (t)}.
\]
We can now graphically represent our solution under this subcase as follows:

\begin{center}
\includegraphics[scale=0.3]{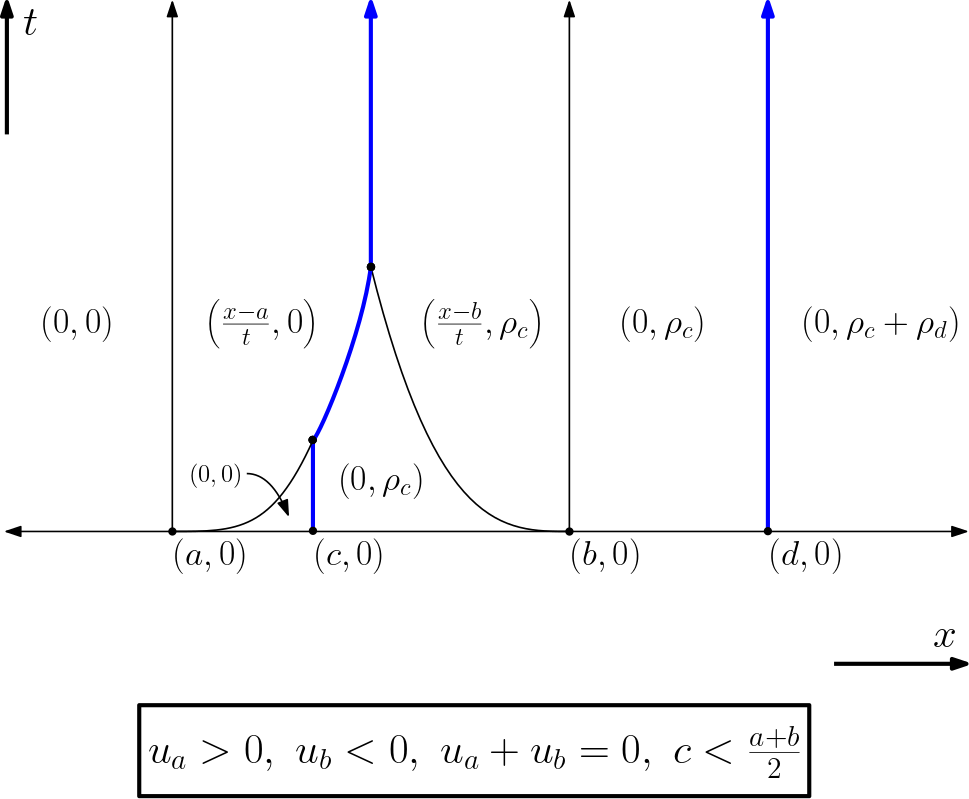}
\end{center}

Secondly, we consider the subcase $c = \frac{a+b}{2}$. Defining $\gamma_{{}_{c}} (t)$ to be $\frac{a+b}{2}$ for every $t \geq 0$, the density component $\rho$ will have exactly the same representation as before.

The new graphical representation of our solution under this subcase will be given as follows:

\begin{center}
\includegraphics[scale=0.4]{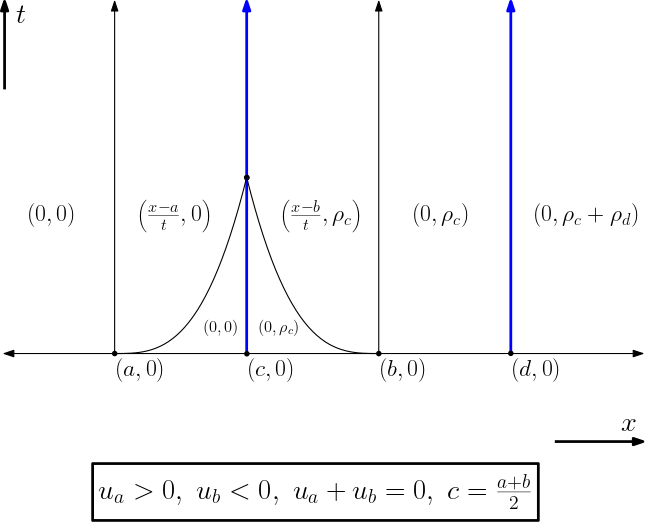}
\end{center}

The last subcase to be considered is $c > \frac{a+b}{2}$. We define the curve $\gamma_{{}_{c}}$ by

\[
\begin{aligned}
\gamma_{{}_{c}} (t) := \begin{cases}
c,\ &0 \leq t \leq t_{{}_{c,1}} := \frac{(b-c)^2}{- 2 u_b},
\\
b - \sqrt{- 2 u_b t},\ &t_{{}_{c,1}} \leq t \leq t_{{}_{c,2}} := \frac{(b-a)^2}{2 \left( \sqrt{u_a} + \sqrt{- u_b} \right)^2},
\\
\frac{a+b}{2},\ &t \geq t_{{}_{c,2}}.
\end{cases}
\end{aligned}
\]
With this definition, the density component $\rho$ has the same representation as in the previous subcases and the new graphical representation of our solution is given as follows:

\begin{center}
\includegraphics[scale=0.4]{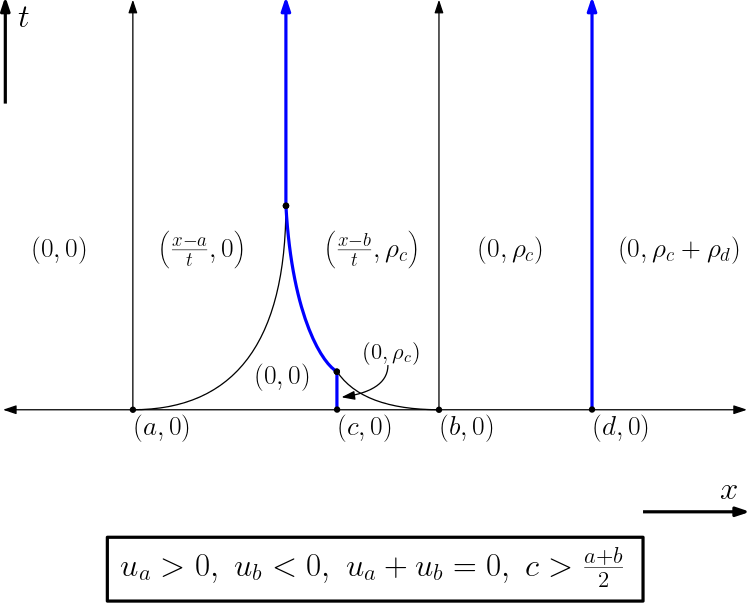}
\end{center}

\textbf{Case 6.} $u_a < 0$, $u_b < 0$

The limit $\lim_{\epsilon \rightarrow 0} \left( u^\epsilon , R^\epsilon \right)$ is evaluated explicitly in different regions as follows:

\begin{itemize}

	\item $x < a$
	
In this region, we introduce the curves

\begin{itemize}
	
	\item $p_a (s) := a - \sqrt{- 2 u_a s}$,
	
	\item $p_b (s) := b - \sqrt{- 2 \left( u_a + u_b \right) s}$,
	
	\item $l(s) := \frac{a+b}{2} + \frac{u_b}{b-a} s$

\end{itemize}	
defined for every $s \geq 0$. The required vanishing viscosity limit $\lim_{\epsilon \rightarrow 0} \left( u^\epsilon , R^\epsilon \right)$ is then obtained as follows:

\[
\begin{aligned}
\lim_{\epsilon \rightarrow 0} 
&\begin{pmatrix}
	\displaystyle{ \frac{a-x}{2t} } \cdot \frac{\begin{aligned}
	\frac{e^{- \frac{| u_a |}{\epsilon}} - 1}{A_\epsilon\ e^{A_\epsilon^2 + \frac{u_a}{\epsilon}}} + \frac{b-x}{a-x} \cdot \frac{e^{- \frac{| u_b |}{\epsilon}} - 1}{B_\epsilon\ e^{B_\epsilon^2 + \frac{u_a + u_b}{\epsilon}}}
\end{aligned}}{\begin{aligned}
	\sqrt{\pi} + \textnormal{erfc} \left( A_\epsilon \right) e^{\frac{| u_a |}{\epsilon}} \left( 1 - e^{- \frac{| u_a |}{\epsilon}} \right) + \textnormal{erfc} \left( B_\epsilon \right) e^{\frac{| u_a + u_b |}{\epsilon}} \left( 1 - e^{- \frac{| u_b |}{\epsilon}} \right)
\end{aligned} } 
		\\
		\\
	\frac{
\begin{aligned}
\rho_c\ \textnormal{erfc} \left( B_\epsilon \right) e^{\frac{| u_a + u_b |}{\epsilon}} \left( 1 - e^{- \frac{| u_b |}{\epsilon}} \right) \\
+ \rho_c\ \textnormal{erfc} \left( C_\epsilon \right) e^{\frac{| u_a |}{\epsilon}} + \rho_d\ \textnormal{erfc} \left( D_\epsilon \right) e^{\frac{| u_a + u_b |}{\epsilon}}
\end{aligned} 
}{ 
\begin{aligned}
\sqrt{\pi} + \textnormal{erfc} \left( A_\epsilon \right) e^{\frac{| u_a |}{\epsilon}} \left( 1 - e^{- \frac{| u_a |}{\epsilon}} \right) + \textnormal{erfc} \left( B_\epsilon \right) e^{\frac{| u_a + u_b |}{\epsilon}} \left( 1 - e^{- \frac{| u_b |}{\epsilon}} \right)
\end{aligned} }						
\end{pmatrix}^T		 		
\\
	= &\begin{cases}
		\left( 0 , 0 \right),\ &{ x < \min{ \big\{ p_a (t) , p_b (t) \big\} } },
		\\
		\left( \frac{x-b}{t} , \rho_c \right),\ &{ x \in \Big( p_b (t) , p_a (t) \Big) \cup \Big( \max{ \big\{ p_a (t) , p_b (t) , l(t) \big\} } , \infty \Big) },
		\\
		\left( \frac{x-a}{t} , 0 \right),\ &{ x \in \Big( p_a (t) , p_b (t) \Big) \cup \Big( \max{ \big\{ p_a (t) , p_b (t) \big\} } , l(t) \Big) }.
		\end{cases}
\end{aligned}
\]

	\item $a < x < c$

\[
\begin{aligned}
\lim_{\epsilon \rightarrow 0} 
&\begin{pmatrix}
	\displaystyle{ \frac{b-x}{2t} } \cdot \frac{\begin{aligned}
	\frac{x-a}{b-x} \cdot \frac{e^{- \frac{| u_a |}{\epsilon}} - 1}{A_\epsilon\ e^{A_\epsilon^2}} + \frac{e^{- \frac{| u_b |}{\epsilon}} - 1}{B_\epsilon\ e^{B_\epsilon^2 + \frac{u_b}{\epsilon}}}
\end{aligned}	}{\begin{aligned}
	\sqrt{\pi} + \textnormal{erfc} \left( A_\epsilon \right) \left( e^{- \frac{| u_a |}{\epsilon}} - 1 \right) + \textnormal{erfc} \left( B_\epsilon \right) e^{\frac{| u_b |}{\epsilon}} \left( 1 - e^{- \frac{| u_b |}{\epsilon}} \right)
\end{aligned}	 }
		\\
		\\
	\frac{
\begin{aligned}
\rho_c\ \textnormal{erfc} \left( B_\epsilon \right) e^{\frac{| u_b |}{\epsilon}} \left( 1 - e^{- \frac{| u_b |}{\epsilon}}  \right) 
\\
+ \rho_c\ \textnormal{erfc} \left( C_\epsilon \right) + \rho_d\ \textnormal{erfc} \left( D_\epsilon \right) e^{\frac{| u_b |}{\epsilon}}
\end{aligned} 
}{ 
\begin{aligned}
\sqrt{\pi} + \textnormal{erfc} \left( A_\epsilon \right) \left( e^{- \frac{| u_a |}{\epsilon}} - 1 \right) + \textnormal{erfc} \left( B_\epsilon \right) e^{\frac{| u_b |}{\epsilon}} \left( 1 - e^{- \frac{| u_b |}{\epsilon}} \right)
\end{aligned} }						
\end{pmatrix}^T		 		
\\
	= &\begin{cases}
		\left( 0 , 0 \right),\ &x < b - \sqrt{- 2 u_b t},
		\\
		\left( \frac{x-b}{t} , \rho_c \right),\ &x > b - \sqrt{- 2 u_b t}.
		\end{cases}
\end{aligned}
\]

	\item $c < x < b$

\[
\begin{aligned}
\lim_{\epsilon \rightarrow 0} 
&\begin{pmatrix}
	\displaystyle{ \frac{b-x}{2t} } \cdot \frac{\begin{aligned}
	\frac{x-a}{b-x} \cdot \frac{e^{- \frac{| u_a |}{\epsilon}} - 1}{A_\epsilon\ e^{A_\epsilon^2}} + \frac{e^{- \frac{| u_b |}{\epsilon}} - 1}{B_\epsilon\ e^{B_\epsilon^2 + \frac{u_b}{\epsilon}}}
\end{aligned}	}{\begin{aligned}
	\sqrt{\pi} + \textnormal{erfc} \left( A_\epsilon \right) \left( e^{- \frac{| u_a |}{\epsilon}} - 1 \right) + \textnormal{erfc} \left( B_\epsilon \right) e^{\frac{| u_b |}{\epsilon}} \left( 1 - e^{- \frac{| u_b |}{\epsilon}} \right)
\end{aligned}	 }
		\\
		\\
	\frac{
\begin{aligned}
\sqrt{\pi}\ \rho_c + \rho_c\ \textnormal{erfc} \left( B_\epsilon \right) e^{\frac{| u_b |}{\epsilon}} \left( 1 - e^{- \frac{| u_b |}{\epsilon}} \right) 
\\
- \rho_c\ \textnormal{erfc} \left( C_\epsilon \right) + \rho_d\ \textnormal{erfc} \left( D_\epsilon \right) e^{\frac{| u_b |}{\epsilon}}
\end{aligned} 
}{ 
\begin{aligned}
\sqrt{\pi} + \textnormal{erfc} \left( A_\epsilon \right) \left( e^{- \frac{| u_a |}{\epsilon}} - 1 \right) + \textnormal{erfc} \left( B_\epsilon \right) e^{\frac{| u_b |}{\epsilon}} \left( 1 - e^{- \frac{| u_b |}{\epsilon}} \right)
\end{aligned} }		
\end{pmatrix}^T		 		
\\
	= &\begin{cases}
		\left( 0 , \rho_c \right),\ &x < b - \sqrt{- 2 u_b t},
		\\
		\left( \frac{x-b}{t} , \rho_c \right),\ &x > b - \sqrt{- 2 u_b t}.
		\end{cases}	 
\end{aligned}
\]

	\item $b < x < d$

\[
\begin{aligned}
\lim_{\epsilon \rightarrow 0} 
&\begin{pmatrix}
	\displaystyle{ \frac{ \epsilon}{\sqrt{2 t \epsilon}} } \cdot \frac{\begin{aligned}
	\frac{e^{- \frac{| u_a + u_b |}{\epsilon}} - e^{- \frac{| u_b |}{\epsilon}}}{e^{A_\epsilon^2}} + \frac{e^{- \frac{| u_b |}{\epsilon}} - 1}{e^{B_\epsilon^2}}
\end{aligned}	}{\begin{aligned}
	\sqrt{\pi} + \textnormal{erfc} \left( A_\epsilon \right) \left( e^{- \frac{| u_a + u_b |}{\epsilon}} - e^{- \frac{| u_b |}{\epsilon}} \right) + \textnormal{erfc} \left( B_\epsilon \right) \left( e^{- \frac{| u_b |}{\epsilon}} - 1 \right)
\end{aligned}	 }
		\\
		\\
	\frac{
\begin{aligned}
\sqrt{\pi}\ \rho_c + \rho_c\ \textnormal{erfc} \left( B_\epsilon \right) \left( e^{- \frac{| u_b |}{\epsilon}} - 1 \right) 
\\
- \rho_c\ \textnormal{erfc} \left( C_\epsilon \right) e^{- \frac{| u_b |}{\epsilon}} + \rho_d\ \textnormal{erfc} \left( D_\epsilon \right)
\end{aligned} 
}{ 
\begin{aligned}
\sqrt{\pi} + \textnormal{erfc} \left( A_\epsilon \right) \left( e^{- \frac{| u_a + u_b |}{\epsilon}} - e^{- \frac{| u_b |}{\epsilon}} \right) + \textnormal{erfc} \left( B_\epsilon \right) \left(  e^{- \frac{| u_b |}{\epsilon}} - 1 \right)
\end{aligned} }		
\end{pmatrix}^T		 		
\\
	= &\left( 0 , \rho_c \right).
\end{aligned}
\]

	\item $x > d$

\[
\begin{aligned}
\lim_{\epsilon \rightarrow 0} 
&\begin{pmatrix}
	\displaystyle{ \frac{ \epsilon}{\sqrt{2 t \epsilon}} } \cdot \frac{\begin{aligned}
	\frac{e^{- \frac{| u_a + u_b |}{\epsilon}} - e^{- \frac{| u_b |}{\epsilon}}}{e^{A_\epsilon^2}} + \frac{e^{- \frac{| u_b |}{\epsilon}} - 1}{e^{B_\epsilon^2}}
\end{aligned}	}{\begin{aligned}
	\sqrt{\pi} + \textnormal{erfc} \left( A_\epsilon \right) \left( e^{- \frac{| u_a + u_b |}{\epsilon}} - e^{- \frac{| u_b |}{\epsilon}} \right) + \textnormal{erfc} \left( B_\epsilon \right) \left( e^{- \frac{| u_b |}{\epsilon}} - 1 \right)
\end{aligned}	 }
		\\
		\\
	\frac{
\begin{aligned}
\sqrt{\pi}\ \left( \rho_c + \rho_d \right) + \rho_c\ \textnormal{erfc} \left( B_\epsilon \right) \left( e^{- \frac{| u_b |}{\epsilon}} - 1 \right) 
\\
- \rho_c\ \textnormal{erfc} \left( C_\epsilon \right) e^{- \frac{| u_b |}{\epsilon}} - \rho_d\ \textnormal{erfc} \left( D_\epsilon \right)
\end{aligned} 
}{ 
\begin{aligned}
\sqrt{\pi} + \textnormal{erfc} \left( A_\epsilon \right) \left(  e^{- \frac{| u_a + u_b |}{\epsilon}} -  e^{- \frac{| u_b |}{\epsilon}} \right) + \textnormal{erfc} \left( B_\epsilon \right) \left(  e^{- \frac{| u_b |}{\epsilon}} - 1 \right)
\end{aligned} }		
\end{pmatrix}^T		 		
\\
	= &\left( 0 , \rho_c + \rho_d \right).
\end{aligned}
\]

\end{itemize}
To describe our solution, let us introduce the following curves:
\[
\begin{aligned}
\gamma_{{}_{a,1}} (t)
	&:= \begin{cases}
a - \sqrt{- 2 u_a t},\ &0 \leq t \leq t_{a,1} := \frac{(b-a)^2}{2 \left( \sqrt{- ( u_a + u_b )} - \sqrt{- u_a} \right)^2},
	\\
b - \sqrt{- 2 ( u_a + u_b ) t},\ &t \geq t_{a,1},
\end{cases}	
	\\
	\\
\gamma_{{}_{a,2}} (t)
	&:= \begin{cases}
a,\ &0 \leq t \leq t_{a,2} := \frac{(b-a)^2}{- 2 u_b},
	\\
\frac{a+b}{2} + \frac{u_b}{b-a} t,\ &t_{a,2} \leq t \leq t_{a,3} := \frac{(b-a)^2}{2 \left( \sqrt{- ( u_a + u_b )} - \sqrt{- u_a} \right)^2},
	\\
b - \sqrt{- 2 ( u_a + u_b ) t},\ &t \geq t_{a,3},
\end{cases}	
	\\
	\\
\gamma_{{}_b} (t)
	&:= \begin{cases}
b - \sqrt{- 2 u_b t},\ &0 \leq t \leq t_{b,1} := \frac{(b-a)^2}{- 2 u_b},
	\\
\frac{a+b}{2} + \frac{u_b}{b-a} t,\ &t_{b,1} \leq t \leq t_{b,2} := \frac{(b-a)^2}{2 \left( \sqrt{- ( u_a + u_b )} - \sqrt{- u_a} \right)^2},
	\\
b - \sqrt{- 2 ( u_a + u_b ) t},\ &t \geq t_{b,2},
\end{cases}	
	\\
	\\
\gamma_{{}_c} (t)
	&:= \begin{cases}
c,\ &0 \leq t \leq t_{c,1} := \frac{(b-c)^2}{- 2 u_b},	
	\\
b - \sqrt{- 2 u_b t},\ &t_{c,1} \leq t \leq t_{c,2} := \frac{(b-a)^2}{- 2 u_b},
	\\
\frac{a+b}{2} + \frac{u_b}{b-a} t,\ &t_{c,2} \leq t \leq t_{c,3} := \frac{(b-a)^2}{2 \left( \sqrt{- ( u_a + u_b )} - \sqrt{- u_a} \right)^2},
	\\
b - \sqrt{- 2 ( u_a + u_b ) t},\ &t \geq t_{c,3},
\end{cases}	
\\
\\
\gamma_{{}_{d}} (t) &:= d,\ t \geq 0.
\end{aligned}
\]
The velocity component $u$ can now be described as follows:

\[
\begin{aligned}
u(x,t) = \begin{cases}
0,\ &x \in \Big( -\infty , \gamma_{{}_{a,1}} (t) \Big) \cup \Big( \gamma_{{}_{a,2}} (t) , \gamma_{{}_b} (t) \Big) \cup \Big( b , \infty \Big),
\\
\frac{x-a}{t},\ &x \in \Big( \gamma_{{}_{a,1}} (t) , \gamma_{{}_{a,2}} (t) \Big),
\\
\frac{x-b}{t},\ & x \in \Big( \gamma_{{}_b} (t) , b \Big).
\end{cases}
\end{aligned}
\]
Finally, for getting $\rho$, we fix $\phi \in C_c^\infty \left( (-\infty,\infty) \times [0,\infty) \right)$ and compute

\[
\begin{aligned}
	\langle R_x , \phi \rangle 
		&= - \langle R , \phi_x \rangle
	\\
		&= - \int_0^\infty \int_{\gamma_{{}_c} (t)}^{d} \rho_c\ \phi_x\ dx\ dt - \int_0^\infty \int_{d}^{\infty} ( \rho_c + \rho_d )\ \phi_x\ dx\ dt
	\\
		&= \int_0^\infty \rho_c \left[ \phi \left( \gamma_{{}_c} (t) , t \right) - \phi \left( d , t \right) \right] dt + \int_0^\infty ( \rho_c + \rho_d )\ \phi \left( d , t \right)\ dt
	\\
		&= \langle \rho_c \left( \delta_{x = \gamma_{{}_c} (t)} - \delta_{x = d} \right) + ( \rho_c + \rho_d )\ \delta_{x = d} , \phi \rangle
	\\
		&= \langle \rho_c\ \delta_{x = \gamma_{{}_c} (t)} + \rho_d\ \delta_{x = d} , \phi \rangle.
\end{aligned}
\]
This implies that $\rho$ is given by

\[
\rho = \rho_c\ \delta_{x = \gamma_{{}_c} (t)} + \rho_d\ \delta_{x = \gamma_{{}_{d}} (t)},
\]
where $\gamma_{{}_{d}} : s \longmapsto d$ for every $s \geq 0$. This solution has the following graphical representation:

\begin{center}
\includegraphics[scale=0.4]{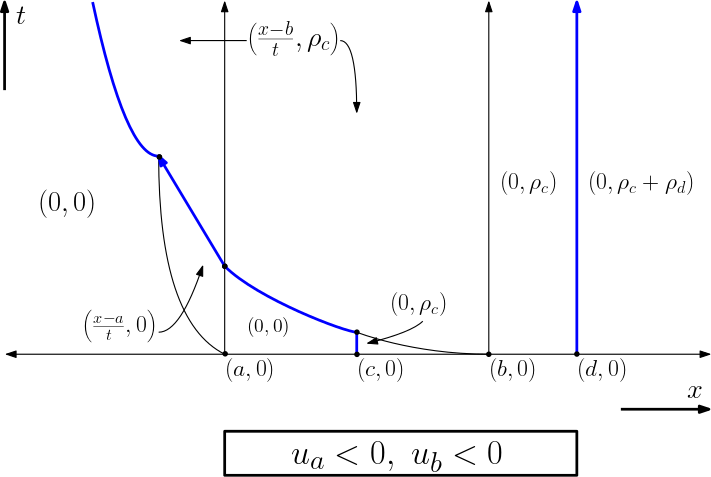}
\end{center}

\end{proof}

\section*{Appendix}

In this section, we derive the various properties of the function erfc used throughout this paper. Furthermore, we justify the computations leading to the explicit representations of $u^\epsilon$ and $R^\epsilon$ under the prescribed initial data.

\begin{enumerate}

	\item We prove the following four properties of erfc:

\begin{enumerate}

	\item For every $z \in \textbf{R}^1$, $\textnormal{erfc} \left( z \right) + \textnormal{erfc} \left( -z \right) = \sqrt{\pi}$.
	
	\item $\lim_{z \rightarrow \infty} \textnormal{erfc} \left( z \right) = 0$.
	
	\item $\textnormal{erfc} \left( z \right) = \left( \frac{1}{2 z} - \frac{1}{4 z^3} + o \left( \frac{1}{z^3} \right) \right)\ e^{- z^2} \textnormal{ as } z \rightarrow \infty$.
	
	\item $\lim_{z \rightarrow \infty}\ z\ \textnormal{erfc} \left( z \right)\ e^{z^2} = \frac{1}{2}$.

\end{enumerate}
The first property can be verified as follows:

\[
\begin{aligned}
\textnormal{erfc} \left( -z \right) = \int_{-z}^{\infty} e^{-y^2}\ dy &= \int_{-\infty}^{\infty} e^{-y^2}\ dy - \int_{-\infty}^{-z} e^{-y^2}\ dy
\\
&= \sqrt{\pi} - \int_{z}^{\infty} e^{-\eta^2}\ d\eta
\\
&= \sqrt{\pi} - \textnormal{erfc} \left( z \right),
\end{aligned}
\]
whereas the second property follows from the definition of erfc:

\[
\lim_{z \rightarrow \infty} \textnormal{erfc} \left( z \right) = \lim_{z \rightarrow \infty} \int_{z}^{\infty} e^{-s^2}\ ds = 0.
\]
We now verify the third and fourth properties. For any $z > 1$, we can integrate by parts to get

\[
\begin{aligned}
\textnormal{erfc} \left( z \right) 
		&= \int_{z}^{\infty}\ \left( - \frac{1}{2t} \right)\ \frac{d}{dt} \left( e^{-t^2} \right)\ dt
\\
		&= \frac{1}{2z}\ e^{-z^2} + \int_{z}^{\infty}\ \frac{1}{4 t^3}\ \frac{d}{dt} \left( e^{-t^2} \right)\ dt
\\
		&= \left( \frac{1}{2z} - \frac{1}{4 z^3} \right)\ e^{- z^2} + \int_{z}^{\infty}\ \frac{3}{4 t^4}\ e^{-t^2}\ dt		,
\end{aligned}
\]
so that

\[
\begin{aligned}
\left| z^3\ \left[ e^{z^2}\ \textnormal{erfc} \left( z \right) - \left( \frac{1}{2z} - \frac{1}{4 z^3} \right) \right] \right| 
		&\leq - \frac{3}{8 z^2}\ \int_{z}^{\infty}\ \frac{d}{dt}\ \left( e^{z^2 - t^2} \right)\ dt 
\\
		&\leq \frac{3}{8 z^2}.	
\end{aligned}
\]
Since $\lim_{z \rightarrow \infty} \frac{3}{8 z^2} = 0$, this proves our claim. This property also implies that

\[
z^2\ \left| z\ \textnormal{erfc} \left( z \right)\ e^{z^2} - \frac{1}{2} \right| \rightarrow \frac{1}{4} \textnormal{ as } z \rightarrow \infty,
\]
and hence, we can conclude that $\lim_{z \rightarrow \infty}\ z\ \textnormal{erfc} \left( z \right)\ e^{z^2} = \frac{1}{2}$.

\item The underlying computations are justified for the case $u_a < 0$, $u_b > 0$ in the region $x<a$. Similar computation methods can be used in the other regions under the remaining cases.

\begin{itemize}

	\item $\displaystyle{ \lim_{\epsilon \rightarrow 0}} e^{- \frac{| u_a |}{\epsilon}} = \lim_{\epsilon \rightarrow 0} e^{- \frac{| u_b |}{\epsilon}} = 0$

	\item $\displaystyle{ \lim_{\epsilon \rightarrow 0}} A_\epsilon\ e^{{A_\epsilon}^2 + \frac{u_a}{\epsilon}} \cdot \textnormal{erfc} \left( A_\epsilon \right)  e^{\frac{| u_a |}{\epsilon}} = \lim_{\epsilon \rightarrow 0} A_\epsilon\ \textnormal{erfc} \left( A_\epsilon \right) e^{A_\epsilon^2} = \frac{1}{2}$

	\item $\displaystyle{ \lim_{\epsilon \rightarrow 0}} \frac{A_\epsilon\ e^{{A_\epsilon}^2 + \frac{u_a}{\epsilon}}}{B_\epsilon\ e^{{B_\epsilon}^2 + \frac{u_a}{\epsilon}}} = \lim_{\epsilon \rightarrow 0} \frac{a-x}{b-x} \cdot e^{\frac{(a-x)^2 - (b-x)^2}{2 t \epsilon}} = 0$
	
	\item $\displaystyle{ \lim_{\epsilon \rightarrow 0}} A_\epsilon\ e^{{A_\epsilon}^2 + \frac{u_a}{\epsilon}} \textnormal{erfc} \left( B_\epsilon \right)  e^{\frac{| u_a |}{\epsilon}}  = \lim_{\epsilon \rightarrow 0} \frac{\left( a-x \right) f \left( B_\epsilon \right)}{\left( b-x \right) e^{\frac{(b-x)^2 - (a-x)^2}{2 t \epsilon}}}  = 0$
	
The last limit relation will also imply that

\[
\begin{aligned}
\lim_{\epsilon \rightarrow 0} A_\epsilon\ e^{{A_\epsilon}^2 + \frac{u_a}{\epsilon}} \cdot \textnormal{erfc} \left( C_\epsilon \right)  e^{\frac{| u_a |}{\epsilon}} 
	&= \lim_{\epsilon \rightarrow 0} \frac{a-x}{c-x} \cdot f(B_\epsilon) \cdot e^{\frac{(a-x)^2 - (c-x)^2}{2 t \epsilon}}
	\\
	&= 0;
\\
\\
\lim_{\epsilon \rightarrow 0} A_\epsilon\ e^{{A_\epsilon}^2 + \frac{u_a}{\epsilon}} \cdot \textnormal{erfc} \left( D_\epsilon \right)  e^{\frac{| u_a |}{\epsilon}} 
	&= \lim_{\epsilon \rightarrow 0} \frac{a-x}{d-x} \cdot f(B_\epsilon) \cdot e^{\frac{(a-x)^2 - (d-x)^2}{2 t \epsilon}}
	\\
	&= 0.			
\end{aligned}
\]	

	\item \textbf{Evaluation of ${\displaystyle{ \lim_{\epsilon \rightarrow 0}} } A_\epsilon\ e^{{A_\epsilon}^2 + \frac{u_a}{\epsilon}}$}:

\[
\begin{aligned}
\lim_{\epsilon \rightarrow 0} A_\epsilon\ e^{{A_\epsilon}^2 + \frac{u_a}{\epsilon}} 
	&= \lim_{\epsilon \rightarrow 0} \frac{a-x}{\sqrt{2 t \epsilon}}\ e^{\frac{(a-x)^2 + 2 u_a t}{2 t \epsilon}}
\\
	&= \begin{cases}
	\infty,\ &{x < a - \sqrt{- 2 u_a t}},
	\\
	0,\ &{x > a - \sqrt{- 2 u_a t}}.	
	\end{cases}
\end{aligned}
\]	

	\item \textbf{Evaluation of $\displaystyle{ \lim_{\epsilon \rightarrow 0}} \textnormal{erfc} \left( A_\epsilon \right) e^{\frac{| u_a |}{\epsilon}}$}:

\[
\begin{aligned}
\lim_{\epsilon \rightarrow 0} \textnormal{erfc} \left( A_\epsilon \right) e^{\frac{| u_a |}{\epsilon}} 
	&= \lim_{\epsilon \rightarrow 0} \frac{f(A_\epsilon)}{A_\epsilon\ e^{{A_\epsilon}^2 + \frac{u_a}{\epsilon}}} 
	\\
	&= \begin{cases}
	0,\ &{x < a - \sqrt{- 2 u_a t}},
	\\
	\infty,\ &{x > a - \sqrt{- 2 u_a t}}.
	\end{cases}
\end{aligned}
\]

\end{itemize}

\end{enumerate}

\section*{Acknowledgements}

The research was supported by the project "Basic research in physics and multidisciplinary sciences" (Identification \# RIN4001). The author is thankful to Professor K. T. Joseph for suggesting this problem. He also acknowledges the cooperation and support of Professor Anupam Pal Choudhury, Professor Agnid Banerjee and Professor K. T. Joseph during the preparation of this paper.

\end{document}